\documentclass[10pt,longbibliography]{article}

\usepackage[utf8]{inputenc}

\usepackage[T1]{fontenc}

\usepackage[a4paper, margin=2.0cm]{geometry}

\usepackage{amsmath, amsthm, amsfonts, amssymb}
\usepackage{mathrsfs}           

\usepackage[colorlinks=true,linkcolor=blue,citecolor=blue,urlcolor=blue,breaklinks]{hyperref}

\usepackage{bbm}

\usepackage[numbers]{natbib}
\usepackage{url}
\usepackage{graphicx} 
\usepackage{amssymb}
\usepackage{bbm}
\usepackage{amsmath}
\usepackage{eucal}
\usepackage{tikz-cd}
\usepackage{amsthm}
\usepackage{stmaryrd}
\usepackage{blindtext} 
\usepackage{epigraph} 

\newtheorem{Theo}{Theorem}[section]
\newtheorem{Def}[Theo]{Definition}
\newtheorem{Ex}[Theo]{Example}
\newtheorem{Example}[Theo]{Example}
\newtheorem{Prop}[Theo]{Proposition}
\newtheorem{Lemma}[Theo]{Lemma}
\newtheorem{Corolary}[Theo]{Corollary}

\newcommand{\C}{\mathcal{C}}
\newcommand{\Sets}{\mathcal{S}\text{ets}}

\newcommand{\One}{\mathbbm{1}}
\newcommand{\Po}{\mathbb{P}}
\newcommand{\B}{\mathbb{B}}
\newcommand{\He}{\mathbb{H}}
\newcommand{\Qo}{\mathbb{Q}}
\newcommand{\D}{\mathcal{D}}

\newcommand{\N}{\omega}

\newcommand{\Pk}{\mathbb{P}_{\kappa}}

\newcommand{\hC}{\Hat{\C}}

\newcommand{\ydot}{\overset{.}{y}}
\newcommand{\zdot}{\overset{.}{z}}
\newcommand{\xdot}{\overset{.}{x}}
\newcommand{\wdot}{\overset{.}{w}}
\newcommand{\forces}{\Vdash^{*}}
\newcommand{\Forces}{\Vdash}
\newcommand{\lb}{\llbracket}
\newcommand{\rb}{\rrbracket}
\newcommand{\us}{\overset{\to}{u}}
\newcommand{\lp}{\overset{\leftarrow}{p}}
\newcommand{\x}{\overset{\to}{x}}

\usepackage{setspace}

\title{\emph{Relating forcing relations}}
\author{Michel Viana Smykalla \\ { \small TALTECH} \\
{\small michel.viana@taltech.ee} \and Hugo Luiz Mariano \\ {\small IME-USP} \\ {\small hugomar@ime.usp.br}}
\date{}

\begin{document}

\maketitle
\doublespacing
\begin{abstract}
Forcing was first introduced by Paul J. Cohen in his work on the independence of the Continuum Hypothesis. Other formulations of forcing appeared using Model Theory, Boolean-valued Models, and Topos Theory. There is a folkloric claim that these three approaches are equivalent, at least at the level of their mathematical content. In this work, we present some results not found in the literature toward establishing connections between these versions of forcing.

{\bf Keywords:} forcing, Boolean-valued models, sheaves, topos theory
\end{abstract}

\section*{Introduction}

To complete the proof of the independence of the Continuum Hypothesis ($CH$) from the axioms of Zermelo-Fraenkel Set Theory including the Axiom of Choice ($ZFC$), Paul J. Cohen developed a technique called forcing, which allows us to extend models of $ZFC$ and, in particular, to present an extension where the $CH$ fails, see \cite{Cohen63} and \cite{Cohen64}. For the first time in the history of Mathematics, one mathematical statement was proven to be undecidable inside the theory, and it started the era of independent results. Other formulations of forcing appeared almost at the same time in the 1960s, given the fact that the use of constructable sets could be replaced by constructing a Boolean-valued Model, done by the hands of Dana Scott and Solovay \cite{Scott67}. A decade later, formulations in Topos Theory appeared (Mitchell-Bénabou language, Kripke Joyal semantics, see \cite{MacMoe94} Chapter VI). Forcing ideas were also introduced in Model Theory in the 1970s by Shoenfield \cite{Shoen71}.

There is a well-known claim that these three approaches of forcing in Set-theory  are the same, at least at the level of their mathematical content:

\begin{center}
\emph{``Nevertheless, it is our clear understanding that the ultimate mathematical content of all these methods (generic sets, Boolean-valued models, and double-negation sheaves) is essentially the same. Indeed, a reading of the original paper by Paul Cohen clearly reveals the role there of double-negation. And sheafification has a wraith-like presence in Cohen's paper. 
Perhaps a full understanding makes use of all three approaches --- generic sets, sheaves, Boolean-valued Models!''} \\ Saunders Mac Lane and
Ieke Moerdijk \cite{MacMoe94}
    
\end{center}


However, to the best of our knowledge, there is no published paper containing a precise/complete description of these equivalences. The aim of this paper is to shed some light on this, building on the results 
presented in the first author's Master's Thesis, \cite{Smy24}.

{\bf Outline of the paper:} We begin by presenting the basic definitions in forcing in Section \ref{Forcingnotation}, to establish the notation. Section \ref{rfr} is devoted to present a comparison between forcing notions between posets that we can find in the literature, under the existence of a particular poset morphism called dense morphism. In Section \ref{forcingbooleanandposets}, we propose a point of view of where forcing with posets and forcing semantics in Boolean-valued models are the same. The key step here is to unify the definitions of name and forcing relation. Section \ref{forcingmodelsandsheaves} is devoted to compare  Boolean-valued models and topos of sheaves over complete Boolean algebras and to establish a generalization of a well-known result connecting notions of forcing using sheaf theory, thanks to a generalization of the so called Comparison Lemma in Topos Theory. We finish this work in Section \ref{futureworks}, showing directions of future research concerning categorical and semantical questions.

From now on, we assume that the definitions refer to sets inside a fixed countable transitive model $M$, pointing out when it is not the case.

\section{Basic forcing notation}\label{Forcingnotation}
Before we go to the comparison between the methods, first we introduce the basic definitions of forcing with posets and generic filters, and forcing through Boolean-valued models. For a complete presentation of each version, see \cite{Jech03} and \cite{Kunen11}, respectively. 

\begin{Def}\label{forcingposetdef}
	Let  $\Po$ be a non-empty set, $\One \in \Po$ and $\leq$ a relation on $\Po$. We say that the triple $(\Po, \leq, \One)$ is a \textbf{forcing poset} if the relation $\leq$ is a pre-order and for all $p \in \Po$, $p \leq \One$.
\end{Def}

\begin{Ex}
	Let $\N$ be the set of natural numbers with the reverse inequality $\succcurlyeq$, i.e., for all $x,y \in \N$
	$$x \succcurlyeq y \text{ iff } y \leq x.$$
	Then $(\N,\succcurlyeq,0)$ is a forcing poset.
\end{Ex}

\begin{Ex}\label{Cohenforcingdef}
	Fix a cardinal $\kappa > \aleph_{1}$ and consider the following set:
	$$\Pk= \{ f \in 2^{\omega \times \kappa}: |f| < \aleph_{0} \}.$$
	That is, $\Pk$ is the set of all finite functions from $ \omega \times \kappa$ to $2$. For all $f,g \in \Pk$, $f \leq g$ if $g \subseteq f$ as a function. With this order, $\Pk$ is a forcing poset, which we call by \textbf{Cohen forcing}.
\end{Ex}

\begin{Def}
	Let $\Po$ be a forcing poset and $D \subseteq \Po$. Then $D$ is \textbf{dense} in $\Po$ if for all $p \in \Po$, there exists $d \in D$ such that $d \leq p$. 
\end{Def}

\begin{Def}
	Let $\Po$ be a poset. A subset $D \subseteq \Po$ is \textbf{predense} if for all $p \in \Po$, there exists $d \in D$ such that $p$ and $d$ are compatible, that is, there exists $r \in \Po$ such that $r \leq p,d$.
\end{Def}

\begin{Def}
	Let $\Po$ be a forcing poset and $D \subseteq \Po$. Given $p \in \Po$, we say that $D$ is \textbf{dense below p} if for all $q \in \Po$ such that $q \leq p$, there exists $d \in D$ so that $d \leq q$.
\end{Def}

\begin{Def}
	Let $\Po$ be a forcing poset and $G \subseteq \Po$. Then $G$ is a \textbf{filter} on $\Po$ if
	
	\begin{enumerate}
		\item $\One \in G$.
		\item For all $p,q \in G$, there exists $r \in G$ such that $r \leq p,q.$
		\item For all $p, q \in \Po$, if $q \in G$ and $q \leq p$, then $p \in G.$
	\end{enumerate}
\end{Def}

\begin{Def}
	Let $\Po$ be a forcing poset, and $G$ be a filter on $\Po$. Then $G$ is $\Po-$\textbf{generic} (over $M$) if for all dense subset $ D \subseteq \Po$ such that $D \in M$, $G \cap D \neq \emptyset$.
	
\end{Def}

\begin{Def}\label{Pnamesdef}
	Let $\Po$ be a forcing poset. Then a set $\xdot$ is a ($\Po-)$\textbf{name} if $\xdot$ is a relation and the elements of $\xdot$ are of the form $(\ydot,p)$, where $\ydot$ is a $(\Po-)$name and $p \in \Po$. We denote by $M^{\Po}$ the class of all $\Po-$names. 
\end{Def}

\begin{Def}
	Let $\Po$ be a forcing poset and $G$ a filter on $\Po$. Given $\xdot$ a name, we define
	$$ \xdot_{G} = \{\ydot_{G}: \exists p \in G((\ydot,p) \in \xdot)\}.$$
	
\end{Def}

\begin{Def}
	Let $\Po$ be a forcing poset. Then 
	$$M[G] = \{\xdot_{G}~|~ \xdot \in M^{\Po}\}.$$
\end{Def}

\begin{Def}\label{forcingestreladef}
	Let $\Po$ be a forcing poset and $\xdot$ and $\ydot$ be $\Po-$names. We define recursively the \textbf{forcing relation} $\Vdash^*$ as follows: For all $p \in \Po$,
	\begin{enumerate}
		\item  p  $\Vdash^*$ $\xdot = \ydot$ iff for all $ \zdot \in dom(\xdot) \cup dom(\ydot)$ and for all $q \leq p(q \Vdash^* \zdot \in \xdot $ iff $q \Vdash^*  \zdot \in \ydot)$.  
		\item $p \Vdash^* \xdot \in \ydot $ iff the set $\{q \leq p: \exists (\zdot,r) \in \ydot$ which $q \leq r $ and $ q \Vdash^* \xdot = \zdot \}$ is dense below $p$.
		
		Let $\phi$ and $\psi$ be sentences of language of forcing. Then: 
		
		\item $p \forces \phi \land \psi$ iff $p \forces \phi$ and $p \forces \psi$.
		
		\item $p \forces \neg \phi$ iff  there is no $q \leq p$ such that $q \forces \phi$.
		
		\item $p \forces \phi \to \psi$ iff there is no $q \leq p$ such that $q \forces \phi \land \neg \psi$ 
		
		\item $p \forces \phi \lor \psi$ iff $\{q: q \forces \phi$ or $q \forces \psi \}$ is dense below p.
		\item $p \forces \phi \leftrightarrow \psi$ iff there are no $q \leq p$ such that $q \forces \phi \land \neg \psi$ and no $r \leq p$ such that $r \forces \neg \phi \land \psi$.
		\item $p \forces \forall x \phi(x)$ iff $p \forces \phi(\xdot)$, for all name $\xdot$.
		
		\item $p \forces \exists x\phi(x)$ iff the set $\{q \in \Po: \exists \xdot$ such that $q \forces \phi(\xdot)\}$ is dense below p.
		
	\end{enumerate}

\end{Def}

\begin{Lemma}\label{forcingwithposetslemma}
	Let $\Po$ be a forcing poset, $\phi$ be a formula of the language of forcing and $G$ be a generic filter on $\Po$ over $M$. Then
	\begin{enumerate}
		\item For all $p \in \Po$, if $p \in G$ and $p \forces \phi$, then $M[G] \models \phi$. 
		\item Suppose that $M[G] \models \phi$. Then there exists $p \in G$ so that $p \forces \phi$.
	\end{enumerate}
\end{Lemma}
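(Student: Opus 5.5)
We prove both items simultaneously by induction on the complexity of $\phi$, where atomic formulas $\xdot=\ydot$ and $\xdot\in\ydot$ are themselves handled by an inner induction on the ranks of the pair $(\xdot,\ydot)$, ordered lexicographically. Two elementary facts about $\forces$ are proved first, by the same induction directly from Definition \ref{forcingestreladef}. \textbf{Monotonicity}: if $p\forces\phi$ and $q\leq p$ then $q\forces\phi$. \textbf{Density}: if $\{q: q\forces\phi\}$ is dense below $p$ then $p\forces\phi$.

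Alongside these we use the basic genericity fact. If $D\in M$ is dense below $p$ and $p\in G$, then $G\cap D\neq\emptyset$; to see this, apply genericity to the dense set $D\cup\{q: q\perp p\}$. Moreover, for any $D\in M$, $G$ meets either $D$ or $\{q:\forall d\in D\,(q\perp d)\}$, since $D$ together with this second set is dense. Finally, we need the fact that the relation $\forces$ restricted to a fixed formula and names below a given rank is definable in $M$. This holds because the recursion in Definition \ref{forcingestreladef} is carried out inside $M$, and it guarantees that all the sets of conditions we consider belong to $M$.

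\textbf{Item 1} (truth). For $\xdot\in\ydot$, let $p\in G$. The set $D$ of $q\leq p$ with some $(\zdot,r)\in\ydot$, $q\leq r$, $q\forces\xdot=\zdot$ is dense below $p$. Pick $q\in G\cap D$. Then $r\in G$, so $\zdot_G\in\ydot_G$, and by induction $\xdot_G=\zdot_G$. For $\xdot=\ydot$, take $\zdot_G\in\xdot_G$ witnessed by $(\zdot,r)\in\xdot$ with $r\in G$. Choose $s\in G$ below $p$ and $r$. Then $s\forces\zdot\in\xdot$, trivially from the definition, so $s\forces\zdot\in\ydot$ by clause 1, and the $\in$ case gives $\zdot_G\in\ydot_G$. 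The connective cases are routine. For $\neg\phi$: if $M[G]\models\phi$, item 2 for $\phi$ gives $q\in G$ forcing $\phi$; a common extension of $q$ and $p$ then contradicts $p\forces\neg\phi$. For $\vee$ and $\exists$ we use density below $p$, and $\forall$ holds because every element of $M[G]$ is some $\xdot_G$.

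\textbf{Item 2} (definability of truth). For $\neg\phi$, let $D=\{q: q\forces\phi\}\in M$. If $G$ met $D$, item 1 would give $M[G]\models\phi$, a contradiction. So $G$ meets $\{q:\forall d\in D\, (q\perp d)\}$, and any such $q$ forces $\neg\phi$ by monotonicity. For $\wedge$, take a common extension in $G$ of the two witnesses. For $\vee$ and $\exists$, a witness forcing one disjunct or one instance $\phi(\xdot)$ works directly. For $\forall x\phi$, argue through $\neg\exists x\neg\phi$: if no $p\in G$ forced $\forall x\phi$, the dichotomy argument yields $p\in G$ with a dense-below-$p$ set of $q\forces\neg\phi(\xdot_q)$, and meeting it contradicts the hypothesis via item 1. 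The atomic cases are the main obstacle, because $\forces$ for $=$ and $\in$ is defined by mutual recursion. For $\xdot_G\in\ydot_G$, pick $(\zdot,r)\in\ydot$ with $r\in G$ and $\zdot_G=\xdot_G$; induction gives $s\in G$ with $s\forces\xdot=\zdot$, and a common extension of $r$ and $s$ forces $\xdot\in\ydot$. For $\xdot_G=\ydot_G$, let $E$ be the set of $q$ such that for some $\zdot$ in the domains and some $q$ we have $q\forces\zdot\in\xdot$ and $q\forces\neg\zdot\in\ydot$, or vice versa; this set lies in $M$ by definability. If $G$ met $E$, item 1 would give a $\zdot_G$ separating $\xdot_G$ from $\ydot_G$, which is impossible. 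So some $p\in G$ is incompatible with all of $E$, and we check from clause 1, using density and monotonicity, that $p\forces\xdot=\ydot$.
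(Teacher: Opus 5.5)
Your proof is correct and follows essentially the same approach as the paper: the paper gives no argument of its own and simply cites Kunen's Lemma IV.2.44, whose proof is the simultaneous Truth/Definability induction you give, with the same Monotonicity and Density preliminaries and the same device (``$G$ meets $D$ or meets the conditions incompatible with $D$'') for negation and equality. One bookkeeping repair is needed: the inner induction cannot literally be on the ranks $(\rho(\xdot),\rho(\ydot))$ ordered lexicographically, because the $=$ case in both items appeals to $\zdot\in\xdot$ for $\zdot\in\mathrm{dom}(\ydot)$, and the pair $(\rho(\zdot),\rho(\xdot))$ is lexicographically larger than $(\rho(\xdot),\rho(\ydot))$ when $\rho(\xdot)<\rho(\zdot)<\rho(\ydot)$; instead, induct along the recursion defining $\forces$ itself, for example via the measure $(\max(\rho(a)+1,\rho(b)),0)$ for $a\in b$ and $(\max(\rho(a),\rho(b)),1)$ for $a=b$, which strictly decreases at every recursive call (also note that $D\cup\{q:\forall d\in D\,(q\perp d)\}$ is dense only when $D$ is downward closed, which holds for every $D$ you apply it to, by monotonicity).
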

\begin{proof}
	See \cite{Kunen11} Lemma IV.2.44.
\end{proof}

\begin{Lemma}\label{extensaoncrescemt}
	Let $\Po$ be a forcing poset and $G$ be a filter $\Po$-generic over M. Then:
	\begin{enumerate}
		\item $G \in M[G]$.
		\item $M[G]$ is a transitive model for $ZFC$.
		\item $M \subseteq M[G]$ and $M$ and $M[G]$ have the same ordinals. 
		\item If $N$ is a $ctm$ for $ZFC$, $G \in N$ and $M \subseteq N$, then $M[G] \subseteq N$.
	\end{enumerate}
\end{Lemma}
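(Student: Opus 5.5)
This is the standard Generic Model Theorem, and I would prove its four items in the order (1), (3), (4), (2), since (2) is the only one that needs the forcing machinery.

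For (1), take the canonical name $\dot{G} = \{(\check{p}, p) : p \in \Po\}$. Here $\check{x} = \{(\check{y}, \One) : y \in x\}$ is defined by $\in$-recursion inside $M$. Because $\One \in G$, an $\in$-induction gives $\check{x}_G = x$ for every $x \in M$. Then
$$\dot{G}_G = \{\check{p}_G : p \in G\} = G,$$
so $G \in M[G]$.

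For (3), the check names give $M \subseteq M[G]$ directly. An induction on names shows $\mathrm{rank}(\xdot_G) \leq \mathrm{rank}(\xdot)$, so every ordinal of $M[G]$ lies below an ordinal of $M$. Transitivity of $M[G]$ is immediate from the definition of $\xdot_G$: each element of $\xdot_G$ is of the form $\ydot_G$. Since being an ordinal is absolute for transitive models, and $M \subseteq M[G]$ with $M[G] \cap \mathrm{Ord} \subseteq M$ (the rank bound places these ordinals inside $M$), the two models have the same ordinals.

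For (4), let $N$ be as in the statement. The map $\xdot \mapsto \xdot_G$ is defined by a recursion that is absolute for transitive models of ZFC. The recursion only uses $\xdot \in M \subseteq N$ and $G \in N$, so it can be carried out inside $N$, and therefore $\xdot_G \in N$.

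For (2), the main obstacle, I would verify the ZFC axioms in $M[G]$ one group at a time.

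\begin{itemize}
\item Extensionality and Foundation hold because $M[G]$ is transitive.
\item Pairing: use the name $\{(\xdot,\One),(\ydot,\One)\}$.
\item Union: use $\{(\zdot, \One) : \exists (\ydot,p)\in\xdot\ \exists r\ (\zdot,r)\in\ydot\}$ (a superset suffices, then apply Comprehension).
\item Infinity: use $\check{\omega}$.
\end{itemize}

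The substantive cases are Comprehension, Replacement, Power Set, and Choice. I would rely on Lemma \ref{forcingwithposetslemma} together with the definability of $\forces$ in $M$.

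\emph{Comprehension.} For $a = \xdot_G$ and parameters $\vec{\ydot}$, form the name
$$\{(\zdot, p) : \zdot \in dom(\xdot),\ p \forces \zdot \in \xdot \land \phi(\zdot, \vec{\ydot})\},$$
which lies in $M$ by Comprehension in $M$. Lemma \ref{forcingwithposetslemma} shows that its evaluation is $\{b \in a : \phi^{M[G]}(b)\}$.

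\emph{Power Set.} Use the name $\{(\zdot, \One) : \zdot \subseteq dom(\xdot)\times\Po,\ \zdot \in M\}$. Any $c \subseteq \xdot_G$ in $M[G]$ equals $\zdot_G$ for $\zdot = \{(\ydot,p) : \ydot \in dom(\xdot),\ p \forces \ydot \in \dot{c}\}$.

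\emph{Replacement.} Use reflection/collection in $M$ to find a set of names bounding the witnesses, again via the truth lemma.

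\emph{Choice.} Given $\xdot$, well-order $dom(\xdot)$ in $M$ as $\{\ydot_\alpha\}$. The name $\{\mathrm{op}(\check\alpha,\ydot_\alpha)\}$ then yields a surjection from an ordinal onto a superset of $\xdot_G$, which gives a well-order.

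The delicate point throughout is the definability of $\forces$ in $M$ and the truth lemma, both of which I take from Lemma \ref{forcingwithposetslemma} and \cite{Kunen11}.
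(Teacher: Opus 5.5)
Your proposal is correct and essentially matches the paper's proof. The paper only cites Kunen (Lemmas IV.2.10, IV.2.12, IV.2.18, IV.2.19 and Theorem IV.2.27), and you reconstruct that same argument: check names and the canonical name for $G$ in (1) and (3), absoluteness of the valuation recursion in (4), and the truth and definability lemmas for the ZFC axioms in (2). One small notational fix: in the Choice case the name should be $\{\mathrm{op}(\check\alpha,\ydot_\alpha) : \alpha<\kappa\}\times\{\One\}$, where $\kappa$ is the length of your enumeration of $dom(\xdot)$, because every element of a name must be a pair consisting of a name and a condition.
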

\begin{proof}
	See \cite{Kunen11} Lemmas IV.2.10, IV.2.12, IV.2.18, IV.2.19 and Theorem IV.2.27.
\end{proof}

Now, turning into Boolean-valued models, we have:

\begin{Def}\label{Booleanalgebradef}
	 Let $B$ be a nonempty set. A \textbf{Boolean algebra} is a $6-$tuple $(B,+,-,\cdot,0,1)$ such that: 
	\begin{enumerate}
		\item  $0$ and $1$  are elements of $B$.
		\item The symbols $+$ and $\cdot$ are functions from $B \times B$ to $B$ satisfying the following: For all $u,v$ and $w$ in $B$,
		\begin{enumerate}
			\item $u + v = v + u$ and $u \cdot v = v \cdot u$.
			
			\item $u + (v+w) = (u+v)+w$ and $u\cdot (v \cdot w) = (u \cdot v) \cdot w$.
			
			\item  $u\cdot(u+v) = u$ and $u + (u\cdot v) = u$.
			
			\item  $u \cdot(v + w) = u \cdot v + u \cdot w$ and $u+(v \cdot w) = (u+v)\cdot(u+w)$.
		\end{enumerate}
		\item The symbol $-$ is a function from $B$ to $B$ satisfying the following. For all $u$ in $B$,
		\begin{enumerate}
			\item $u + (-u) = 1$ and $u \cdot(-u) = 0$.
		\end{enumerate}
	\end{enumerate}
\end{Def}
For simplicity, we will abbreviate $(B,+,-, \cdot, 0,1)$ by $B$. For $u,v \in B$, 

\begin{Def}\label{Booleanvaluedmodeldef}
	Let $B$ be a complete Boolean algebra. A \textbf{Boolean-valued model} for set theory $\mathcal{U}$ consists of a transitive class $U$ equipped with two functions $$||-\in-||, || - = - ||: U \times U \to B,$$ such that:
	\begin{enumerate}
		\item $||x = x||= 1$.
		\item $||x=y|| = ||y=x||$.
		\item $||x = y|| \cdot ||y=z|| \leq ||x = z||$.
		\item $||x \in y|| \cdot ||v = x|| \cdot ||w=y|| \leq ||v \in w||$.
		
	\end{enumerate}
\end{Def}
With Boolean value for atomic formulas we can define a Boolean value of an arbitrary formula by induction on complexity. Let $\phi(\x)$ and $\psi(\x)$ be formulas with free variables $\x = (x_{1}.\dots, x_{n}).$ For $ \us =(u_{1},u_{2},\dots, u_{n})$ with $u_{i} \in U$, we have:
\begin{enumerate}
	
	\item $||\neg \phi(\us)|| = -||\phi(\us)||$.
	\item $||\phi(\us) \land \psi(\us)|| = ||\phi(\us)||\cdot ||\psi(\us)||$.
	\item $||\phi(\us) \lor \psi(\us)|| = ||\phi(\us)||+ ||\psi(\us)||$.
	\item $||\phi(\us) \rightarrow \psi(\us)|| = ||\neg\phi(\us)\lor \psi(\us)||$.
	\item $||\phi(\us) \leftrightarrow \psi(\us)|| = ||(\psi(\us)\to \phi(\us))\land (\psi(\us)\to \phi(\us))||$.
	
	\item $||\exists x \phi(x,\us)|| = \underset{x \in U}{\bigvee}||\phi(x,\us)||$.
	
	\item $||\forall x \phi(x,\us)|| = \underset{x \in U}{\bigwedge}||\phi(x,\us)||$.
\end{enumerate}

\begin{Def}
	
	Let $B$ be a complete Boolean algebra and denote by $V$ the  \emph{universe}. By recursion on the ordinals $\alpha \in Ord$, define the \textbf{Boolean-valued universe} $V^{B}$ as follows:
	
	\begin{enumerate}\label{BooleanvalueinV^B}
		\item $V_{0}^{B}$ = $\emptyset$.
		\item $V_{\alpha^{+}}^{B} = \{ f: f \text{ is a function with }im(f) \subseteq B\text{ and dom}(f) \subseteq V_{\alpha}^{B}\}.$
		\item $V_{\alpha}^{B} = \underset{\beta < \alpha}{\bigcup}V_{\beta}^{B}$, if $\alpha$ is a limit ordinal.
		\item $V^B = \underset{\alpha \in Ord}{\bigcup} V_{\alpha}^{B}.$ 
	\end{enumerate}
\end{Def}

One would say that $V^B$ is a kind of generalization of the traditional universe of sets $V$, and $V^B$ will be our transitive class. The Boolean value $||\phi||$ we will use with $V^B$ can be viewed as a generalization of the semantic consequence $V \models \phi$. The next definition will be done by recursion on the pair $(\rho(x),\rho(y))$, where $\rho(x)$ is the least ordinal such that $x \in V^{B}_{\rho(x) + 1}.$

\begin{Def}\label{BooleanValuesForV^B}
	Let $B$ be a complete Boolean algebra. For all $x,y \in V^B$, define:
	\begin{enumerate}
		\item $||x \in y|| = \underset{t \in dom(y)}{\bigvee}(||x=t||\cdot y(t)).$  
		\item $||x \subseteq y|| = \underset{t \in dom(x)}{\bigwedge}(-x(t) + ||t \in y||)$.
		\item $||x=y|| = ||x \subseteq y|| \cdot ||y \subseteq x||.$
	\end{enumerate}   
\end{Def}

\begin{Prop}
	
	Let $B$ be a complete Boolean algebra. Then $V^B$ equipped with the functions $||- \in - ||$ and $|| - = - ||$ (see Definition \ref{BooleanValuesForV^B}) forms a Boolean value model, which we will simply denote by $V^B$.
	
\end{Prop}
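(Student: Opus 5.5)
We must check the four axioms of Definition \ref{Booleanvaluedmodeldef} for $V^B$ with the values of Definition \ref{BooleanValuesForV^B}: reflexivity, symmetry, transitivity of $=$, and the substitution property for $\in$. Symmetry $||x=y||=||y=x||$ is immediate, since $||x=y||$ is defined as the symmetric product $||x\subseteq y||\cdot||y\subseteq x||$. All other facts will be proved by well-founded induction on ranks $\rho$ of the elements involved.

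\emph{Reflexivity.} We show $||x=x||=1$ by induction on $\rho(x)$. It suffices to show $||x\subseteq x||=1$, i.e.\ $x(t)\le ||t\in x||$ for every $t\in dom(x)$. By the induction hypothesis $||t=t||=1$, since $\rho(t)<\rho(x)$. Hence $||t\in x||\ge ||t=t||\cdot x(t)=x(t)$, so each term $-x(t)+||t\in x||$ equals $1$.

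\emph{Transitivity and substitution.} These two are intertwined, so I will prove simultaneously, by induction on the triple of ranks $(\rho(x),\rho(y),\rho(z))$ (say on the natural sum of ranks), the following two inequalities:
\begin{enumerate}
\item[(a)] $||x=y||\cdot||y=z||\le ||x=z||$;
\item[(b)] $||x\in y||\cdot||x=z||\le||z\in y||$ and $||x\in y||\cdot||y=z||\le ||x\in z||$.
\end{enumerate}
The axiom $||x\in y||\cdot||v=x||\cdot||w=y||\le||v\in w||$ then follows by applying the two halves of (b) in succession, together with symmetry.

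For the first half of (b), expand $||x\in y||=\bigvee_{t\in dom(y)}||x=t||\cdot y(t)$. Distributivity of infinite joins in a complete Boolean algebra reduces the claim to
\[
||x=t||\cdot||x=z||\cdot y(t)\le ||z=t||\cdot y(t).
\]
This follows from (a) applied to the triple $z,x,t$, where $\rho(t)<\rho(y)$.

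For the second half of (b), we need $||x=t||\cdot y(t)\cdot||y\subseteq z||\le||x\in z||$. Observe that $y(t)\cdot||y\subseteq z||\le y(t)\cdot(-y(t)+||t\in z||)\le ||t\in z||=\bigvee_{s}||t=s||\cdot z(s)$. Distributing again, the claim reduces to $||x=t||\cdot||t=s||\le||x=s||$, which is (a) at lower rank.

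For (a), it suffices by symmetry to show $||x=y||\cdot||y=z||\le -x(t)+||t\in z||$ for each $t\in dom(x)$, i.e.\ $x(t)\cdot||x\subseteq y||\cdot||y=z||\le||t\in z||$. First, $x(t)\cdot||x\subseteq y||\le||t\in y||$. Then the second half of (b), applied to $t,y,z$ with $\rho(t)<\rho(x)$, gives $||t\in y||\cdot||y=z||\le||t\in z||$.

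\emph{Main obstacle.} The main difficulty is organizing the simultaneous induction so that every appeal is genuinely to a smaller rank triple; this is where care is needed. The check is:
\begin{itemize}
\item in (a), the appeal is to (b) on $(t,y,z)$ with $t\in dom(x)$;
\item in the first half of (b), the appeal is to (a) on $(z,x,t)$ with $t\in dom(y)$;
\item in the second half of (b), the appeal is to (a) on $(x,t,s)$, where the ranks of both $y$ and $z$ have dropped.
\end{itemize}
Each step lowers the sum of ranks, so induction on the natural sum $\rho(x)\oplus\rho(y)\oplus\rho(z)$, treating all permutations uniformly, works.
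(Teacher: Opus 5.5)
Your proof is correct and is essentially the standard argument behind the paper's citation (Jech, Lemma 14.16). The only difference is packaging. Jech proves transitivity by itself, by induction on the rank triple, and unfolds your step $||t\in y||\cdot||y=z||\le||t\in z||$ inline, so the appeal goes directly to $||t=s||\cdot||s=u||\le||t=u||$ with $t\in dom(x)$, $s\in dom(y)$, $u\in dom(z)$; he then derives the $\in$-substitution axiom from transitivity with no further induction, whereas your simultaneous induction on the natural sum splits that same chain into two alternating steps (and the first half of your (b) is never actually needed inside the induction).
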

\begin{proof}
	See \cite{Jech03} Lemmas 14.15 and 14.16.
\end{proof}

\begin{Def}\label{M^Bdef}
	Let $M$ be a transitive model for $ZFC$ and $B \in M$ a complete Boolean algebra. We denote by $M^{B}$ the \textbf{Boolean-valued model constructed inside of} $M$. We call an element $\overset{.}{a} \in M^{B}$ by \textbf{name}, using an overhead dot notation.
\end{Def}

\begin{Def}\label{densemorphism}
	Let $(\Po,\leq_{\Po},\One_{\Po})$ and $(\Qo,\leq_{\Qo},\One_{\Qo})$ be two posets with top elements. A \textbf{dense morphism} from $\Qo$ to $\Po$ is a  function  $i: \Qo \to \Po$ satisfying the following properties:
	\begin{enumerate}
		\item $i(\One_{\Qo}) = \One_{\Po}$.
		\item If $q \leq_{\Qo} p$, then $i(q)\leq_{\Po} i(p)$, for all $p, q \in \Qo$.
		\item For all $p,q \in \Qo$, $q \perp_{\Qo} p$, if and only if $i(q) \perp_{\Po} i(p)$.
		\item $i(\Qo)$ is a dense subset of $\Po$.
	\end{enumerate}
\end{Def}

\begin{Lemma}\label{canonicaldensemorphism}
	Let $\Po$ be a poset. Then there exists a dense morphism
	$$i: \Po \longrightarrow \B \setminus \{0\}.$$
    for some complete Boolean algebra $\B$
\end{Lemma}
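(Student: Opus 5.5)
We take $\B$ to be the complete Boolean algebra of regular open subsets of $\Po$. Put on $\Po$ the topology whose basic open sets are the cones $\downarrow p = \{q \in \Po : q \leq p\}$; the open sets are then exactly the downward closed subsets. A subset $U$ is \emph{regular open} if $U = \mathrm{int}(\mathrm{cl}(U))$. Concretely, $U$ is regular open iff $U$ is downward closed and, for every $p$, whenever $U$ is dense below $p$ we have $p \in U$. We define $\B = RO(\Po)$ and $i(p) = \mathrm{int}(\mathrm{cl}(\downarrow p))$, which equals $\{q : \downarrow p \text{ is dense below } q\}$, i.e.\ the set of $q$ such that every $r \leq q$ is compatible with $p$.

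The first step is to check that $RO(\Po)$, ordered by inclusion, is a complete Boolean algebra with the operations
$$U \cdot V = U \cap V,\qquad -U = \{q : \downarrow q \cap U = \emptyset\},\qquad \bigvee_k U_k = \mathrm{int}\Big(\mathrm{cl}\big(\bigcup_k U_k\big)\big),\qquad 0 = \emptyset,\qquad 1 = \Po.$$
This is the classical fact that the regular open sets of any topological space form a complete Boolean algebra. We verify the axioms of Definition \ref{Booleanalgebradef} via the lattice characterization. The order is a complete lattice, with meets $\mathrm{int}\bigcap$. It is distributive because $U \cap \mathrm{int}\,\mathrm{cl}(V) = \mathrm{int}\,\mathrm{cl}(U\cap V)$ for open $U$. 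Complements work because $U \cup -U$ is open and dense, so its regularization is $\Po$. This is the step requiring the most routine verification, and we would cite \cite{Jech03} (Theorem 14.10) for the details rather than redo them.

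Next we check the four conditions of Definition \ref{densemorphism}. Throughout we use $p \in i(p)$, since every $r \leq p$ is compatible with $p$.
\begin{enumerate}
\item $i(\One) = \Po = 1$, since $\downarrow \One = \Po$.
\item If $q \leq p$ then $\downarrow q \subseteq \downarrow p$, and $\mathrm{int}\,\mathrm{cl}$ is monotone, so $i(q) \leq i(p)$.
\item Suppose $p \perp q$. Then $\downarrow p \cap \downarrow q = \emptyset$. Disjoint open sets have disjoint regularizations, because $\mathrm{int}\,\mathrm{cl}(A) \cap B = \emptyset$ whenever $A \cap B = \emptyset$ and $B$ is open; apply this twice. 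Hence $i(p) \cdot i(q) = 0$. Conversely, if $r \leq p, q$ then $r \in i(p) \cap i(q)$, which is therefore nonzero.
\item For density, let $U \neq 0$ be regular open and pick $p \in U$. Since $U$ is downward closed, $\downarrow p \subseteq U$. Since $U$ is regular, $i(p) = \mathrm{int}\,\mathrm{cl}(\downarrow p) \subseteq U$. Finally $i(p) \neq 0$ because $p \in i(p)$.
\end{enumerate}

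The main obstacle is only bookkeeping: $\leq$ on $\Po$ is merely a preorder, so $i$ need not be injective. The definition does not require injectivity, however. We also note that the construction is absolute enough to carry out inside $M$, so that $\B \in M$ as later sections need.
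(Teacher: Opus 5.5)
Your proposal is correct and follows the paper's construction: the regular open algebra of the topology generated by the cones $\downarrow p$, the map $i(p)=\mathrm{int}(\mathrm{cl}(\downarrow p))$, and the same four checks, including the density argument via $i(p)\subseteq \mathrm{int}\,\mathrm{cl}(U)=U$. It also supplies details the paper leaves implicit, such as why disjoint cones have disjoint regularizations. One small slip: the identity $U\cap \mathrm{int}\,\mathrm{cl}(V)=\mathrm{int}\,\mathrm{cl}(U\cap V)$ requires $U$ to be regular open, since for merely open $U$ only $\subseteq$ holds. This is harmless, because you apply it only to elements of $RO(\Po)$.
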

\begin{proof}
Consider the complete Boolean algebra $\B$ of the regular open subsets of the topological space  on $\Po$, endowed with the topology  generated by the subbasis $\{\overset{\leftarrow}{p} : p \in \Po\}$, where $\overset{\leftarrow}{p} = \{q \in \Po: q \leq p\}$. In particular $\leq = \subseteq$, $0 = \emptyset$, and $1 = \Po$.

	Define 
	\begin{align*}
		i : &\Po \longrightarrow \B \setminus \{\emptyset\} \\
		    &p \mapsto int(cl(\overset{\leftarrow}{p})),
	\end{align*}
	where $int(cl(\overset{\leftarrow}{p}))$ denotes the interior of the closure of $\overset{\leftarrow}{p}$. Note that, for all $p \in \Po$, $int(cl(\lp)) \in \B \setminus \{\emptyset\}$, because:
	
	First,
    $$p \in \lp \subseteq int(cl(\lp)).$$

    Moreover,
	$$int(cl(\lp)) \subseteq int(cl(int(\lp))).$$
	and
	$$int(cl(\lp)) \subseteq cl(\lp),$$
	then
	$$cl(int(cl(\lp))) \subseteq cl(\lp)$$
	and
	$$int(cl(int(cl(\lp)))) \subseteq int(cl(\lp)).$$
	It means that $int(cl(\lp))$ is a regular open set, because $int(cl(int(cl(\lp)))) = int(cl(\lp))$, so $int(cl(\lp)) \in \B \setminus \{\emptyset\}$.
	
	The next step is to show that $i$ satisfies the definition of a dense morphism.
	\begin{enumerate}
		\item Note that $\overset{\leftarrow}{\One_{\Po}} = \Po = \One_{\B}$. Therefore $i(\One_{\Po}) = \One_{\B}.$
		
		\item Let $p,q \in \Po$ such that $p \leq_{\Po} q$. Then $\overset{\leftarrow}{p} \subseteq \overset{\leftarrow}{q}$, and $$i(p) = int(cl(\overset{\leftarrow}{p})) \subseteq int(cl(\overset{\leftarrow}{q})) = i(q).$$
		Therefore $i(p) \leq_{\B} i(q).$
		\item Let $p,q \in \Po$ such that $p \perp_{\Po} q$. We want to prove that $i(p) \perp_{\B} i(q)$. Note that $p \perp_{\Po} q$ implies $\overset{\leftarrow}{p} \cap \overset{\leftarrow}{q} = \emptyset$. If there exists $ x \in \overset{\leftarrow}{p} \cap int(cl(\overset{\leftarrow}{q}))$,  $\overset{\leftarrow}{x} \cap \overset{\leftarrow}{q} \neq \emptyset$, then $\overset{\leftarrow}{p} \cap \overset{\leftarrow}{q} \neq \emptyset$. Therefore $\overset{\leftarrow}{p} \cap int(cl(\overset{\leftarrow}{q})) =  \emptyset$. In particular, we can prove by contradiction that it implies that $int(cl(\overset{\leftarrow}{p})) \cap int(cl(\overset{\leftarrow}{q})) = \emptyset$, and the desired result $i(p) \perp_{\B} i(q)$ follows. Now assume that $p \not \perp_{\Po} q$. Then there exists $r \in \Po$ so that $r \leq p,q$. We have that
		\begin{align*}
			&r \in \lp \subseteq int(cl(\lp)) = i(p), \\
			&r \in \overset{\leftarrow}{q} \subseteq int(cl(\overset{\leftarrow}{q})) = i(q).
		\end{align*}
		Therefore $\overset{\leftarrow}{r} \leq_{\B} i(p),i(q)$, which means that $i(p) \not \perp_{\B} i(q)$.
		
		\item Let $B \in \B \setminus \{\emptyset\}$. To finish this proof, we want to show that there exists $c \in i(\Po)$ such that $c \leq_{\B} B$. Fix a $b \in B$. In particular, $\overset{\leftarrow}{b} \subseteq B$. Therefore $$i(b) = int(cl(\overset{\leftarrow}{b})) \subseteq int(cl(B)) = B.$$
		Then, take $c = i(b)$.
		
	\end{enumerate}
\end{proof}

We call the morphism $i$ constructed above by \textbf{canonical dense morphism}.

\begin{Def}\label{forcingrelationbooleanalgebra}
	Let $\Po$ be a forcing poset, $\B$ be the complete Boolean algebra of regular open sets of $\Po$, and $i: \Po \to \B$ be the canonical dense morphism (see Definition \ref{canonicaldensemorphism}). Given a formula $\phi(\overset{.}{a}_{1},\overset{.}{a}_{2}, \dots, \overset{.}{a}_{n})$ of the language of forcing, where $\overset{.}{a}_{i} \in M^{\B}$, for all $p \in \Po$ ,
	$$p \Vdash \phi(\overset{.}{a}_{1},\overset{.}{a}_{2}, \dots, \overset{.}{a}_{n}) \text{ iff }  i(p) \leq ||\phi(\overset{.}{a}_{1},\overset{.}{a}_{2}, \dots, \overset{.}{a}_{n})||.$$
\end{Def}

\section{Relating forcing in posets}\label{rfr}

The first comparison of forcings we will explore is between forcings defined in two different posets, $\Po$ and $\Qo$. Of course, if $\Po \cong \Qo$, i.e., if there exists a bijection function $\Po \to \Qo$ which preserves and reflect their orders, then $\forces_{\Po}$ and $\forces_{\Qo}$ are the \emph{same}. We will see that a specific family of morphisms between forcing posets, the dense morphisms (see Definition \ref{densemorphism}), allows us to compare forcing relations and semantics, connecting their extensions via generic filters. 

To compare forcing relations in two posets, $\Po$ and $\Qo$, we need to find a way to compare their notion of names. The following definition presents a way to define a $\Po-$name from a $\Qo-$name, given a dense morphism.
\begin{Def}\label{i*definition}
	Let $\Po,\Qo$ be posets and $i: \Qo \to \Po$ a  morphism. Given a $\Qo-$name $\xdot$, define the $\Po$-name  $i_{*}(\xdot)$ by recursion $$i_{*}(\xdot) = \{(i_{*}(\ydot),  i(q)): (\ydot,q) \in \xdot \}.$$  
\end{Def}
In the definition above, it is unnecessary for $i: \Qo \to \Po$ to be a dense morphism. For example, we could remove the condition that $i(\Qo)$ is a dense subset of $\Po$. However, we choose to use a dense morphism because it will be necessary for the main theorems that connect forcing definitions.
\begin{Lemma}\label{completemorphismextensions}
	Let  $\Po$ and $\Qo$ be two posets and $i \in M$ a dense morphism $i: \Qo \to \Po$. Fix a $G \subseteq \Po$ a $\Po-generic$ filter over $M$ and define $H = i^{-1}(G)$. Then 
	\begin{enumerate}
		\item $H$ is $\Qo-$generic over $M$.
		\item $i_{*}(\xdot)$ is a $\Po-name$ and $$i_{*}(\xdot)_{G} = \xdot_{H},$$ for all $\xdot$ $\Qo-$name.
		\item $M[H] \subseteq M[G]$.
	\end{enumerate}
\end{Lemma}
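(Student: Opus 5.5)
I will prove the three items in order, with item 1 carrying the real content; items 2 and 3 are then routine.

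\textbf{Item 1.} First I check that $H = i^{-1}(G)$ is a filter on $\Qo$.
\begin{enumerate}
\item $\One_{\Qo} \in H$, since $i(\One_{\Qo}) = \One_{\Po} \in G$.
\item Upward closure: if $q \in H$ and $q \leq_{\Qo} p$, then $i(q) \leq i(p)$, so $i(p) \in G$ and $p \in H$.
\item Compatibility: let $p, q \in H$. Since $G$ is a filter, $i(p)$ and $i(q)$ are compatible in $\Po$. By condition 3 of Definition \ref{densemorphism}, $p$ and $q$ are compatible in $\Qo$. This only gives some common extension in $\Qo$, not necessarily one in $H$, so I use genericity. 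In $M$, the set
$$D = \{ s \in \Po : \exists r \in \Qo\,(r \leq p, q \text{ and } s \leq i(r))\} \cup \{ s \in \Po : s \perp i(p) \text{ or } s \perp i(q)\}$$
is dense. Given $s_0 \in \Po$ compatible with both $i(p)$ and $i(q)$, I want an extension in the first part; this needs the density of $i(\Qo)$ combined with the incompatibility clause. Since $G$ is generic, pick $s \in G \cap D$. As $i(p), i(q) \in G$, $s$ cannot lie in the second part, so $s \leq i(r)$ for some $r \leq p, q$. Upward closure of $G$ gives $i(r) \in G$, hence $r \in H$.
\end{enumerate}

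Density of $D$ is the step I expect to be the main obstacle. The delicate case is when $s_0$ meets both $i(p)$ and $i(q)$ but no common image below it. My first attempt is the following. Refine $s_0$ below $i(p)$. Use density of $i(\Qo)$ to get $i(t) \leq$ that refinement. If $i(t)$ is incompatible with $i(q)$, we land in the second clause. Otherwise, condition 3 gives $t \not\perp q$. I also need $t$ compatible with $p$; this follows from $i(t) \leq i(p)$ and condition 3. But I then need a common extension of $t, p, q$, and pairwise compatibility does not give that. To handle this, I will first prove the lemma that $i(t) \leq i(p)$ implies every extension of $t$ is compatible with $p$, so that extensions of $t$ below $q$ behave well; this step needs care. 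If that fails, I will instead define $D' = \{ s : \exists r \in H\text{-candidates} \dots\}$ as the standard dense set $\{ i(r) : r \leq p \text{ and } r \leq q\} \cup \{ s : s \perp i(p)\} \cup \{s : s\perp i(q)\}$, as in Kunen's treatment of complete embeddings.

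Genericity of $H$ is then straightforward. For $E \in M$ dense in $\Qo$, $i(E)$ is dense in $\Po$: given $s \in \Po$, take $i(t) \leq s$ by density of $i(\Qo)$, then $e \leq t$ with $e \in E$, so $i(e) \leq s$. Since $i \in M$, $i(E) \in M$, so $G$ meets $i(E)$, and hence $H$ meets $E$.

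\textbf{Item 2.} This goes by $\in$-recursion on $\xdot$. That $i_*(\xdot)$ is a $\Po$-name is immediate from Definition \ref{i*definition}. For the evaluation,
$$i_*(\xdot)_G = \{ i_*(\ydot)_G : (\ydot,q) \in \xdot,\ i(q) \in G\} = \{\ydot_H : (\ydot,q)\in\xdot,\ q \in H\} = \xdot_H,$$
using the induction hypothesis for the middle equality.

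\textbf{Item 3.} By item 2, every element $\xdot_H$ of $M[H]$ equals $i_*(\xdot)_G \in M[G]$, since $i_* \in M$. Alternatively, $H \in M[G]$ because $i \in M \subseteq M[G]$ and $G \in M[G]$, so Lemma \ref{extensaoncrescemt}(4) applies.
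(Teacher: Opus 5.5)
Your argument is correct once you actually carry out the step you flagged, and that step takes one line. If $t' \leq t$, then $i(t') \leq i(t) \leq i(p)$. So $i(t')$ and $i(p)$ are compatible, and hence $t' \not\perp p$ by condition 3 of Definition \ref{densemorphism}. In the case $i(t) \not\perp i(q)$, pick $t' \leq t, q$ and then $r \leq t', p$. This gives $r \leq p, q$ and $i(r) \leq i(t) \leq s_0$, so $i(r) \in D$ lies below $s_0$. You should state and prove this rather than hedge. Your ``fallback'' set $\{ i(r) : r \leq p, q\} \cup \{ s : s \perp i(p)\} \cup \{ s : s \perp i(q)\}$ is not an independent route: its density is the same claim and needs the same lemma. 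Items 2 and 3 are fine as written.

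The paper gives no argument of its own here; it only cites Kunen's treatment of complete embeddings. The standard argument in that setting avoids your density computation in $\Po$ altogether. Your genericity paragraph never uses that $H$ is a filter; it shows that $H$ meets every dense $E \subseteq \Qo$ with $E \in M$. Apply it to $E = \{ r \in \Qo : r \leq p, q\} \cup \{ r : r \perp p\} \cup \{ r : r \perp q\}$, which is trivially dense in $\Qo$ and lies in $M$. For $r \in H \cap E$, the elements $i(r), i(p), i(q)$ all lie in the filter $G$, so they are pairwise compatible. Condition 3 then excludes $r \perp p$ and $r \perp q$, hence $r \leq p, q$ with $r \in H$. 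The filter property thus reduces to genericity plus the incompatibility clause. Density of $i(\Qo)$ is used only once, to push dense subsets of $\Qo$ forward to dense subsets of $\Po$. This is why the same proof works for complete embeddings, where one pushes forward maximal antichains instead of dense sets.
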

\begin{proof}
	See \cite{Kunen11} Lemma IV.4.2 and Lemma IV.4.4.
\end{proof}

\begin{Lemma}
	Let $\Po$ be a forcing poset. Let $G$ and $H$ be two $\Po-generic$ filters over $M$ such that $G \subseteq H$, then $G = H$. 
\end{Lemma}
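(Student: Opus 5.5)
We argue by contradiction: assume $G \subsetneq H$ and pick $p \in H \setminus G$. The plan is to build a dense set in $M$ whose meeting point with $G$ is forced to clash with $p$ inside $H$.

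Consider
$$D = \{ q \in \Po : q \leq p \ \text{ or } \ q \perp p \}.$$
This set is definable in $M$ from $\Po$ and $p$, both of which lie in $M$, so by Separation $D \in M$.

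First we check that $D$ is dense. Given any $r \in \Po$, either $r \perp p$, in which case $r \in D$ already; or $r$ and $p$ are compatible, so there is $q \leq r, p$, and this $q$ lies in $D$ and satisfies $q \leq r$.

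By genericity of $G$ there is some $q \in G \cap D$. We rule out both cases of membership in $D$.
\begin{enumerate}
\item If $q \leq p$, then upward closure of the filter $G$ gives $p \in G$, contradicting the choice of $p$.
\item If $q \perp p$, note that $q \in G \subseteq H$ and $p \in H$. Since $H$ is a filter, there is $r \in H$ with $r \leq q, p$, so $q$ and $p$ are compatible, which is a contradiction.
\end{enumerate}
Hence $H \setminus G = \emptyset$, and therefore $G = H$.

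The only delicate point is the choice of $D$, which mixes ``below $p$'' with ``incompatible with $p$'' so that it is dense without any assumption on $p$. Everything else is routine filter manipulation. Genericity of $H$ is in fact not used: the argument shows that any filter containing a generic filter equals it.
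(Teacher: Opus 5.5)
Your proof is correct and is essentially the paper's argument: the same dense set $D=\{q : q\le p \text{ or } q\perp p\}\in M$, meeting $G$, and using only that $H$ is a filter to rule out the incompatible case. The one difference is that you argue by contradiction from some $p\in H\setminus G$, whereas the paper fixes an arbitrary $h\in H$ and concludes $h\in G$ directly.
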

\begin{proof}
	First we show the inclusion $H \subseteq G$. Fix $h \in  H$. Define $$D =\{ p \in \Po: p \leq h \text{ or } p \perp h\}.$$ 
	
	Even though $H$ may not belong to $M$, its elements are in $M$. Using the fact that $M$ is a model for Set Theory, we can use set-theoretic axioms to construct $D$, and then $D \in M$. Given $q \in \Po$, there is two alternatives. If $q \perp h$, $q \in D$. Otherwise, there exists $r \in \Po$ such that $r \leq q,h$. Therefore $r \in D$. Which means that $D$ is dense, so $G \cap D \neq \emptyset$. Now take $s \in G \cap D$. By definition of $D$, as $G \subseteq H$, we have that $s \leq h$, then $h \in G$.
\end{proof}

\begin{Theo}\label{forcinganddensemorphism}
	Let $\Po$ and $\Qo$ be two forcing posets and $i: \Qo \to \Po$ be a dense morphism. Then the following assertions hold:
	\begin{enumerate}
		\item\label{item1} Let $H$ be a filter $\Qo-$generic over $M$. Define $G = \{p \in \Po: \exists q \in H(i(q) \leq p)\}$. Then $G$ is $\Po-$generic over $M$ and $i^{-1}(G) = H$.
		\item\label{item2} Let $G$ a filter $\Po-$generic over $M$ and take $H = i^{-1}(G)$. Then $H$ is $\Qo-$generic over $M$ and $G$ can be written as $G = \{p \in \Po: \exists q \in H(i(q) \leq p)\}$.
		
		\item\label{item3} In cases $(1)$ and $(2),$ we have the equality of the generic extension of $M$: $$M[G] = M[H].$$
		
		\item\label{item4} Let $\phi(x_{1},x_{2},\dots,x_{n})$ be a formula of the language of set theory and   $\xdot_{1},\xdot_{2},\dots,\xdot_{n} \in M^{\Qo}$ \\ $\Qo-$names. Then:
		$$q \forces_{\Qo} \phi(\xdot_{1},\xdot_{2},\dots,\xdot_{n}) \text{ if and only if } ~i(q) \forces_{\Po} \phi(i_{*}(\xdot_{1}),i_{*}(\xdot_{2}),\dots,i_{*}(\xdot_{n})).$$
	\end{enumerate}
\end{Theo}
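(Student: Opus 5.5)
We prove the four items in order; items (\ref{item1}) and (\ref{item2}) are combinatorial facts about dense morphisms, item (\ref{item3}) follows from them together with Lemma \ref{completemorphismextensions} and Lemma \ref{extensaoncrescemt}, and item (\ref{item4}) is transferred through the truth lemma (Lemma \ref{forcingwithposetslemma}).

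For (\ref{item1}), let $G$ be the upward closure of $i[H]$. Then $\One_{\Po}=i(\One_{\Qo})\in G$, and $G$ is upward closed by construction. For compatibility, given $p,p'\in G$ witnessed by $q,q'\in H$, pick $r\in H$ with $r\le q,q'$; then $i(r)\le p,p'$ and $i(r)\in G$. For genericity, let $D\in M$ be dense in $\Po$ and set
$$E=\{q\in\Qo:\exists d\in D\,(i(q)\le d)\},$$
which lies in $M$ since $i\in M$. We claim $E$ is dense in $\Qo$. Given $q$, density of $D$ gives $d\le i(q)$, and density of $i(\Qo)$ gives $q'$ with $i(q')\le d$. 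Then $i(q')$ and $i(q)$ are compatible, so by condition 3 of Definition \ref{densemorphism} the conditions $q'$ and $q$ are compatible; take $r\le q,q'$. We get $i(r)\le i(q')\le d$, so $r\in E$ and $r\le q$. Hence $H\cap E\neq\emptyset$, which gives $G\cap D\neq\emptyset$.

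Clearly $H\subseteq i^{-1}(G)$. For the converse, suppose $i(q)\in G$. By Lemma \ref{completemorphismextensions}(1), $i^{-1}(G)$ is a $\Qo$-generic filter, and it contains $H$. By the preceding lemma on comparable generic filters, $i^{-1}(G)=H$. Alternatively, one can argue directly with the dense set $\{r: r\le q \text{ or } r\perp q\}$ and use condition 3.

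For (\ref{item2}), genericity of $H$ is Lemma \ref{completemorphismextensions}(1). The filter $G'$ generated by $i[H]$ satisfies $G'\subseteq G$ because $G$ is upward closed. By (\ref{item1}), $G'$ is $\Po$-generic, so the comparability lemma gives $G'=G$.

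For (\ref{item3}), Lemma \ref{completemorphismextensions}(3) gives $M[H]\subseteq M[G]$. For the reverse inclusion we have $i\in M$ and $H\in M[H]$ (Lemma \ref{extensaoncrescemt}(1)), so $G$, being definable from $H$ and $i$, lies in $M[H]$. Since $M\subseteq M[H]$ and $M[H]$ is a ctm, Lemma \ref{extensaoncrescemt}(4) yields $M[G]\subseteq M[H]$.

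For (\ref{item4}), rather than induct on the recursive clauses of Definition \ref{forcingestreladef} (the atomic clauses would require a messy comparison of dense-below sets), we pass through semantics. Suppose $q\forces_{\Qo}\phi(\vec{\xdot})$, and let $G$ be any $\Po$-generic filter with $i(q)\in G$. Then $H=i^{-1}(G)\ni q$ is generic, so $M[H]\models\phi(\vec{\xdot}_H)$, that is, $M[G]\models\phi(i_*(\vec{\xdot})_G)$ by Lemma \ref{completemorphismextensions}(2). Conversely, suppose $i(q)\forces_{\Po}\phi(i_*\vec{\xdot})$ and $H\ni q$ is $\Qo$-generic. Using (\ref{item1}), build $G$; then $i(q)\in G$, so $M[G]\models\phi(i_*(\vec{\xdot})_G)=\phi(\vec{\xdot}_H)$.

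The main obstacle is converting ``true in every generic extension containing the condition'' back into $\forces$. Lemma \ref{forcingwithposetslemma}(2) only produces \emph{some} $p'\in G$, not the given condition. We therefore argue by contradiction using density. If $i(q)\not\forces\phi$, we first show some $p\le i(q)$ forces $\neg\phi$, using the standard fact (from Kunen) that the set of conditions deciding $\phi$ is dense. A generic $G\ni p$ exists because $M$ is countable. Then $i(q)\in G$ and $M[G]\models\neg\phi$, a contradiction. The same argument works on the $\Qo$ side. The key subtlety is that this density-of-deciders fact is used but not stated in the paper; it must be cited from \cite{Kunen11}.
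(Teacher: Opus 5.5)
Your argument follows the same route as the paper. Items (1)--(3) go through the upward closure of $i[H]$, the comparability lemma for generic filters, Lemma \ref{completemorphismextensions} and Lemma \ref{extensaoncrescemt}(4). Item (4) transfers truth through $H=i^{-1}(G)$, $M[G]=M[H]$ and $i_{*}(\xdot)_{G}=\xdot_{H}$.

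You are more careful than the paper in three places:
\begin{itemize}
\item You actually prove that the pulled-back set $E$ is dense, for an arbitrary dense $D$. The paper only asserts that $i^{-1}(D)$ is dense for open dense $D$.
\item In item (2) you use the easy inclusion $G'\subseteq G$. The paper instead claims the reverse inclusion ``by definition'', which is the nontrivial direction.
\item In item (4) you notice that going from ``$M[G]\models\phi$ for every generic $G\ni i(q)$'' to $i(q)\forces\phi$ is not supplied by Lemma \ref{forcingwithposetslemma}. The paper simply asserts this step.
\end{itemize}

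The one thing to fix is the fact you invoke for that last step. Density of the deciders only gives some $p\le i(q)$ that decides $\phi$, and that $p$ may force $\phi$ rather than $\neg\phi$. What you actually need is the standard property, proved by induction on $\phi$ (see \cite{Kunen11}): if $\{s: s\forces\phi\}$ is dense below $p$, then $p\forces\phi$. With it, $i(q)\not\forces\phi$ yields some $r\le i(q)$ with no extension forcing $\phi$. That is, $r\forces\neg\phi$ by clause 4 of Definition \ref{forcingestreladef}. Your contradiction argument, using a generic $G\ni r$, then goes through unchanged on both the $\Po$ side and the $\Qo$ side.
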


\begin{proof}
	\begin{enumerate}
		\item First we prove that $G$ is a filter. Note that $i(\One_{\Qo}) = \One_{\Po} \in G$ and $G$ is closed upwards by definition. Now fix $p,q \in G$. Then there exists $p',q' \in H$ such that $i(q') \leq q$ and $i(p') \leq p$. $H$ is a filter, then there exists $r \in H$ such that $r \leq p'$ and $r \leq q'$. So we have $i(r) \in G$ such that $i(r) \leq p$ and $i(r) \leq q$. Now, let $D \subseteq \Po$ a nonempty dense open set of $\Po$. We need to show that $G \cap D \neq \emptyset$, and then $G$ will be $\Po-$generic. $i$ is a dense morphism, therefore $i^{-1}(D)$ is dense, which means that $H \cap i^{-1}(D) \neq \emptyset$. But if $p\cap i^{-1}(D) \neq \emptyset$, $i(p) \in G \cap D$. To conclude, note that by definition $H \subseteq i^{-1}(G)$, and both $H$ and $i^{-1}(G)$ are generic filters. Therefore $H  = i^{-1}(G)$.
		
		\item We know that $H = i^{-1}(G)$ is a $\Qo-$generic filter over $M$ (see Lemma \ref{completemorphismextensions}). By the item \ref{item1},  $$\{p \in \Po: \exists q \in H(i(q) \leq p)\}$$ is a $\Po-$generic filter. By definition, $G \subseteq \{p \in \Po: \exists q \in H(i(q) \leq p)\}$. Therefore $G = \{p \in \Po: \exists q \in H(i(q) \leq p)\}$.
		
		\item We need to show the inclusions. The first one we have by Lemma \ref{completemorphismextensions}, which shows that $M[H] \subseteq M[G]$. Note that $M \subseteq M[H]$, and $G \in M[H]$. Therefore, by { Lemma \ref{extensaoncrescemt}}, $M[G] \subseteq M[H]$.
		
		\item Suppose that $q \forces_{\Po} \phi(\xdot_{1},\xdot_{2},\dots,\xdot_{n}).$ Let $G$ be a filter on $\Po$. Define $H$ as in item \ref{item1}, so $i(q) \in G$ by item \ref{item2}. Using item \ref{item3}, $M[H] = M[G]$ and by hypothesis { (see Lemma \ref{forcingwithposetslemma})}, $$M[H] \models \phi(\xdot_{1},\xdot_{2}, \dots, \xdot_{n}).$$ However, $(\xdot_{j})_{H} = i_{*}(\xdot_{j})_{G}$, for all $j =1,2, \dots, n$. Therefore
		$$M[G] \models \phi(i_{*}(\xdot_{1}),i_{*}(\xdot_{2}),\dots,i_{*}(\xdot_{n})).$$
		In other words, $$i(q) \forces \phi(i_{*}(\xdot_{1}),i_{*}(\xdot_{2}),\dots, i_{*}(\xdot_{n})).$$ The reverse case is analogous. 
	\end{enumerate}
	
\end{proof}
In particular, the previous lemma is true if we consider $\Po$ to be the Boolean algebra of regular open sets of $\Qo$ (removing the bottom element $\emptyset$).

\section{Forcing as Boolean valuation}\label{forcingbooleanandposets}
 In Section \ref{Forcingnotation}, we introduced two notions of forcing relation: On the one hand, we defined $\forces$ using posets. On the other hand, we defined $\Forces$ through a complete Boolean algebra. Now, we will see that $\forces$ and $\Forces$ not only can produce the same independence results but are in a sense \emph{equal}. In other words, we will study a case where forcing with a poset will be the same thing as forcing with a Boolean-valued model. Here we have most of the original contributions of this work.




	
	


Fix a forcing poset $\Po$ and let $i: \Po \to \B \setminus \{\emptyset\}$ be the canonical dense morphism i.e., the one presented in the proof of  Lemma \ref{canonicaldensemorphism}.  By Lemma \ref{forcinganddensemorphism}, we have a correspondence between 
$$(\Po, \forces_{\Po})~\text{ and }~(\B\setminus \{\emptyset \},\forces_{\B \setminus \{\emptyset\}}).$$
The idea now is to relate ($\B\setminus \{\emptyset \},\Forces_{\B \setminus \{\emptyset\}})$ and $(\Po, \forces_{\Po})$.

\begin{Def}\label{functionalnamesdef}
	Let $\Po$ be a forcing poset and $\xdot$ be a $\Po-$name. Then $\xdot$ is a \textbf{functional} $\Po-$name if $\xdot$ is a function and for all $(\ydot, p) \in \xdot$, $\ydot$ is a functional $\Po$-name. We denote by $M^{\Po}_{fun}$ the class of all functional $\Po-$names. Similarly, given a complete Boolean algebra $\B$, an element $\xdot \in M^\B$ is a functional $\B-$name if it is a functional $(\B \setminus \{0\})-$name, and we denote by $~M^\B_{fun}$ the class of all functional $\B-$names.
\end{Def}

The difference between a name and a functional name is that the last is a functional relation. Definition \ref{functionalnamesdef} aims to approximate the notion of $\Po-$names using posets and $\B-$names in Boolean-valued models $M^\B$, once the latter is a function. The following lemma is part of the original contributions of this work.
\begin{Lemma}\label{retractdef}
	Let $\B$ be a complete Boolean algebra. Consider the forcing poset $\B \setminus \{0\} $. For the inclusion of functional $\B-$names  (see Definition \ref{functionalnamesdef}) into 
	$(\B \setminus \{0\})$-names {(see Definition \ref{Pnamesdef})}
	$$s: M^{\B}_{fun} \hookrightarrow M^{\B\setminus \{0\}},$$ 
	there exists a retraction $r: M^{\B \setminus \{0\}} \to M^{\B}_{fun}$ so that $r \circ s = id$.
\end{Lemma}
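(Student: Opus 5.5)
We define $r$ by $\in$-recursion on $(\B\setminus\{0\})$-names, collapsing each relation into a function by joining the conditions attached to the same (already retracted) first coordinate. Given a $(\B\setminus\{0\})$-name $\xdot$, we assume $r(\ydot)$ is defined for all $\ydot \in dom(\xdot)$, and set
$$r(\xdot) = \Big\{ \big(\zdot, \textstyle\bigvee \{ p : \exists \ydot\, ((\ydot,p)\in \xdot \text{ and } r(\ydot)=\zdot)\}\big) : \zdot \in \{ r(\ydot) : \ydot \in dom(\xdot)\} \Big\}.$$
So $dom(r(\xdot)) = \{r(\ydot): \ydot \in dom(\xdot)\}$, and each $\zdot$ in this domain receives the supremum in $\B$ of all conditions $p$ paired in $\xdot$ with some $\ydot$ that retracts to $\zdot$. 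The recursion is legitimate because every $\ydot\in dom(\xdot)$ has lower name-rank than $\xdot$. It can be carried out inside $M$: $\B \in M$ is complete in $M$, and the sets involved are formed by replacement and separation from $\xdot \in M$.

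The first step is to check that $r(\xdot)$ is a functional $\B$-name, by induction on rank. It is a function, since for each $\zdot$ the set $\{p : \dots\}$ is uniquely determined and so is its join. That join is nonzero: $\zdot = r(\ydot)$ for at least one $\ydot\in dom(\xdot)$, there is some $p\neq 0$ with $(\ydot,p)\in\xdot$, and that $p$ lies below the join. Hence the values lie in $\B\setminus\{0\}$. Finally, every element of $dom(r(\xdot))$ is of the form $r(\ydot)$, which is a functional name by the induction hypothesis.

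The second step is to prove $r(s(\xdot)) = \xdot$ for functional $\xdot$, again by induction on rank. For $(\ydot,p)\in\xdot$, the induction hypothesis gives $r(\ydot)=\ydot$, so $dom(r(\xdot)) = dom(\xdot)$. For $\zdot\in dom(\xdot)$, the pairs $(\ydot,p)\in\xdot$ with $r(\ydot)=\zdot$ are exactly those with $\ydot=\zdot$. Since $\xdot$ is a function there is exactly one such pair, namely $(\zdot,\xdot(\zdot))$, and the join of a singleton is $\xdot(\zdot)$. Thus $r(\xdot)=\xdot$.

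The main obstacle is bookkeeping rather than any deep point. One must make sure the value assigned to $\zdot$ quantifies over all $\ydot$ with $r(\ydot)=\zdot$, not just $\ydot=\zdot$, since distinct names can retract to the same functional name. It is this choice that makes $r(\xdot)$ single-valued. One must also confirm that the induction on rank is well-founded for both $r$ and the identity $r\circ s = id$. Using $\bigvee$ rather than an arbitrary choice of one condition also makes $r$ canonical, avoiding any appeal to choice.
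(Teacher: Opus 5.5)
Your proposal is correct and is essentially the paper's proof. It uses the same recursive definition of $r$, joining all conditions attached to names with the same retraction, followed by an $\in$-induction showing $r\circ s = id$; you are also slightly more careful than the paper in checking that each join is nonzero, so that $r(\xdot)$ really takes values in $\B\setminus\{0\}$.
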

\begin{proof}
	By recursion, define:
	\begin{align*}
		r: &M^{\B\setminus \{0\}} \to M^{\B}_{fun} \\
		& \xdot \mapsto \{ (r(\ydot), \bigvee \{ p: (\zdot, p) \in \xdot \text{ and } r(\zdot) = r(\ydot)\}: \ydot \in dom(\xdot)) \}.
	\end{align*}
	For all $\xdot \in M^{\B\setminus \{0\}}, r(\xdot)$ is a name since $\B$ is complete, and it is a functional name because for all $\ydot_{1},\ydot_{2} \in \xdot$, if $r(\ydot_{1}) = r(\ydot_{2})$, then 
	$$\bigvee \{ p: (\zdot,p) \in \xdot \text{ and }r(\zdot) = r(\ydot_{1}) \}= \bigvee \{q: (\wdot,q) \in \xdot \text{ and } r(\wdot)= r(\ydot_{2})\}.$$
	It is easy to show that $r \circ s = id$ by induction on the $\in$ relation. Let $\xdot \in M^{\B}_{fun}$. Suppose that for all $\ydot \in dom(\xdot)$, $$r \circ s (\ydot) = r (\ydot) = \ydot.$$
	Then
	\begin{align*}
		r \circ s(\xdot)= r(\xdot) &= \{ (r(\ydot),\bigvee \{p_{\ydot}: (\ydot,p_{\ydot}) \in \xdot, \ydot \in dom(\xdot)  \})\}\\
		&= \{(\ydot,p_{\ydot}): \ydot \in dom(\xdot) \} \\
		&= \xdot.
	\end{align*}
In the equation above, we used that $\xdot$ is a functional name, then $\ydot$ is a functional name too and so the equalities hold.
\end{proof}

\begin{Lemma}
	Let $\B$ be a complete Boolean algebra. Consider the retraction $$r: M^{\B \setminus \{0\}} \to M^{\B}_{fun},$$ as in Lemma \ref{retractdef}. For all $\xdot \in M^{\B}$,
	$$\One \forces \xdot = r(\xdot).$$
\end{Lemma}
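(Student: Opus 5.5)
We prove the statement by $\in$-induction (equivalently, induction on the rank of $\xdot$), working with the forcing relation $\forces$ on the poset $\B\setminus\{0\}$. Here $\xdot$ ranges over $(\B\setminus\{0\})$-names. For functional names the claim is trivial: $r(\xdot)=\xdot$ by Lemma \ref{retractdef}, and $\One\forces \xdot=\xdot$ always holds. Rather than unfolding the recursive clauses of Definition \ref{forcingestreladef} directly, we use the semantic characterization. By Lemma \ref{forcingwithposetslemma}, $\One\forces \xdot=r(\xdot)$ holds iff $M[G]\models \xdot=r(\xdot)$ for every $\B\setminus\{0\}$-generic $G$ over $M$. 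It therefore suffices to show $\xdot_G=r(\xdot)_G$ for every such $G$.

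\textbf{Induction hypothesis.} Assume that for every $\ydot\in dom(\xdot)$ and every generic $G$ we have $\ydot_G=r(\ydot)_G$.

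\textbf{Inclusion $\xdot_G\subseteq r(\xdot)_G$.} An element of $\xdot_G$ is $\ydot_G$ with $(\ydot,p)\in\xdot$ and $p\in G$. Put $b=\bigvee\{p':(\zdot,p')\in\xdot,\ r(\zdot)=r(\ydot)\}$. Then $p\le b$, so $b\in G$ because $G$ is upward closed. Since $(r(\ydot),b)\in r(\xdot)$, we get $r(\ydot)_G\in r(\xdot)_G$, and $r(\ydot)_G=\ydot_G$ by the induction hypothesis.

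\textbf{Inclusion $r(\xdot)_G\subseteq\xdot_G$.} An element of $r(\xdot)_G$ is $r(\ydot)_G$ with $b=\bigvee S\in G$, where $S=\{p:(\zdot,p)\in\xdot,\ r(\zdot)=r(\ydot)\}$. The key fact is that a generic $G$ on $\B\setminus\{0\}$ meeting $\bigvee S$ must meet $S$. To see this, note that the set
\[D=\{q\in\B\setminus\{0\}: q\le s \text{ for some } s\in S\}\cup\{q\in\B\setminus\{0\}: q\cdot b=0\}\]
lies in $M$ and is dense. Given any nonzero $q$, either $q\cdot b=0$, or $q\cdot b\neq 0$, in which case complete distributivity ($q\cdot\bigvee S=\bigvee_{s\in S} q\cdot s$) yields some $s\in S$ with $q\cdot s\neq0$, and $q\cdot s\in D$ lies below $q$. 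Take $q\in G\cap D$. Since $b\in G$, $q$ is compatible with $b$, so $q\cdot b\neq 0$ and therefore $q\le s$ for some $s\in S$; hence $s\in G$. Then there is $(\zdot,s)\in\xdot$ with $s\in G$ and $r(\zdot)=r(\ydot)$. Applying the induction hypothesis to $\zdot$ gives $\zdot_G=r(\zdot)_G=r(\ydot)_G$, so $r(\ydot)_G\in\xdot_G$.

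The main obstacle is this second inclusion, specifically the step showing that $G$ meets $S$ once it contains the join $\bigvee S$. This is where completeness of $\B$ and genericity are used. One technical point needs care: the supremum $\bigvee S$ must be computed in $\B$ inside $M$, so that $r$, $D$ and the names all belong to $M$. Once both inclusions are in hand, $\xdot_G=r(\xdot)_G$ holds in every generic extension, and the converse direction of Lemma \ref{forcingwithposetslemma}, together with the fact that $\One$ lies in every generic filter, yields $\One\forces\xdot=r(\xdot)$. A purely syntactic proof through clauses (1) and (2) of Definition \ref{forcingestreladef} should also be possible, but it would need a density argument of exactly the same kind.
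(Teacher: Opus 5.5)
Your proof is correct. The paper gives no argument of its own for this lemma; it only refers to Kunen's Exercise IV.4.23.

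Your route is the standard solution to that exercise, and it matches the semantic style the paper uses in Lemma \ref{retractionpreservesforcing}. You prove $\xdot_G=r(\xdot)_G$ for every generic $G$ by $\in$-induction. The key step is that a generic filter containing $\bigvee S$, for $S\in M$, must meet $S$. You then pass back to $\forces$.

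Two small points of precision.

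First, the final step is not literally item (2) of Lemma \ref{forcingwithposetslemma} plus the fact that $\One\in G$. Item (2) only gives \emph{some} $p\in G$ forcing $\xdot=r(\xdot)$. You can fix this in either of two ways:
\begin{enumerate}
\item Invoke the full Forcing Theorem: $p\forces\phi$ iff $M[G]\models\phi$ for every generic $G\ni p$.
\item Argue directly. The set $\{p: p\forces \xdot=r(\xdot)\}$ meets every generic filter. It is therefore dense, because every condition lies in some generic filter over the countable $M$ and $\forces$ is preserved under extension. Hence $\One\forces\xdot=r(\xdot)$ by the usual density property of $\forces$.
\end{enumerate}

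Second, the identity $q\cdot\bigvee S=\bigvee_{s\in S}q\cdot s$ is the infinite distributive law, which holds in every complete Boolean algebra. ``Complete distributivity'' names a much stronger property, which fails for atomless algebras such as the Cohen algebra. You should change the terminology, but the step itself is fine.
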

\begin{proof}
	See \cite{Kunen11} Exercise IV.4.23.
\end{proof}

Moreover, this retraction preserves forcing.

\begin{Lemma}\label{retractionpreservesforcing}
	Let $\B$ be a complete Boolean algebra. Fix a $p \in \B \setminus \{0\}$. Given a formula of the language of forcing $\phi(\xdot_{1},\xdot_{2}, \dots, \xdot_{n}),$ then
	$$ p\forces \phi(\xdot_{1},\xdot_{2}, \dots ,\xdot_{n}) \text{ if and only if } p \forces \phi(r(\xdot_{1}),r(\xdot_{2}),\dots,r(\xdot_{n})).$$
\end{Lemma}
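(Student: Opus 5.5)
The plan is to combine the preceding lemma, $\One \forces \xdot = r(\xdot)$ for every $(\B\setminus\{0\})$-name $\xdot$, with the standard fact that the forcing relation respects names that are forced to be equal. I would argue semantically through generic filters, using Lemma \ref{forcingwithposetslemma}, rather than by induction on the complexity of $\phi$ through the clauses of Definition \ref{forcingestreladef}. The semantic route avoids re-checking the atomic clauses, which are the delicate ones.

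\emph{Step 1.} Fix a $(\B\setminus\{0\})$-generic filter $G$ over $M$. Since $\One \in G$ and $\One \forces \xdot_j = r(\xdot_j)$, item 1 of Lemma \ref{forcingwithposetslemma} gives $(\xdot_j)_G = r(\xdot_j)_G$ in $M[G]$ for every $j=1,\dots,n$. Because $\phi$ is interpreted in $M[G]$ only through the values of its names, it follows that
$$M[G] \models \phi(\xdot_1,\dots,\xdot_n) \iff M[G] \models \phi(r(\xdot_1),\dots,r(\xdot_n)).$$

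\emph{Step 2.} Now suppose $p \forces \phi(\xdot_1,\dots,\xdot_n)$. I want to show $p \forces \phi(r(\xdot_1),\dots,r(\xdot_n))$, and for this I would invoke the truth lemma in the form ``$p \forces \psi$ iff $M[G]\models\psi$ for every generic $G$ containing $p$''. This is the standard equivalence, which follows from Lemma \ref{forcingwithposetslemma} together with the density argument below.
\begin{itemize}
\item[(i)] Item 1 of Lemma \ref{forcingwithposetslemma} together with Step 1 shows that every generic $G \ni p$ satisfies $M[G]\models\phi(r(\xdot_1),\dots,r(\xdot_n))$.
\item[(ii)] Suppose, for contradiction, that $p \not\forces \phi(r(\vec{\xdot}))$. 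Then I would show that some $q\le p$ forces $\neg\phi(r(\vec{\xdot}))$. This is the standard fact that the set of conditions deciding a formula is dense, which is proved by induction using the clauses of Definition \ref{forcingestreladef}.
\item[(iii)] Since $M$ is countable, there is a generic $G$ containing $q$, and hence containing $p$.
\item[(iv)] By (i), $M[G]\models\phi(r(\vec{\xdot}))$. But $q\in G$ forces $\neg\phi(r(\vec{\xdot}))$, so item 1 of Lemma \ref{forcingwithposetslemma} gives $M[G]\models\neg\phi(r(\vec{\xdot}))$, a contradiction.
\end{itemize}
The converse direction is symmetric, since Step 1 is an equivalence.

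\emph{Main obstacle.} The hard step is the ``dense/decide'' fact in (ii): if $p\not\forces\psi$ then some $q\le p$ forces $\neg\psi$. This is the standard property of $\forces$ (Kunen IV.2), but it is not stated explicitly among the earlier results, so it must either be cited or proved. I would prove it by induction on $\psi$, alongside the fact that $\forces$ is closed downward.

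If a purely syntactic proof is preferred, the alternative is to induct on $\phi$. The atomic cases $\xdot\in\ydot$ and $\xdot=\ydot$ would be handled by showing that $\One\forces\xdot=r(\xdot)$ allows names to be substituted inside the atomic clauses. The connective and quantifier clauses then go through trivially, because they refer to the names only through the formulas being substituted. I would fall back on this route only if the semantic one is considered insufficiently self-contained.
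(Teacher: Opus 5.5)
Your proposal is correct and follows essentially the paper's own argument: fix a generic $G$, use $\One \forces \xdot_j = r(\xdot_j)$ to get $(\xdot_j)_G = r(\xdot_j)_G$, and transfer the truth of $\phi$ in $M[G]$ between the two tuples of names. You are more careful than the paper in two respects. The paper calls this an ``induction on complexity'' but treats only the equality case, and does so semantically, so no inductive hypothesis is used. It also leaves implicit the step from truth in every $M[G]$ with $p \in G$ back to $p \forces \phi$, which your steps (ii)--(iv) supply via the decision/density property and the existence of generic filters over the countable $M$.
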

\begin{proof}
	The proof follows easily by induction on the complexity. We will restrict ourselves to showing the case for the equality, as the other cases are similarly proven. Suppose $\phi(\xdot_{1},\xdot_{2},\dots,\xdot_{n })$ be $\xdot = \ydot$. Fix $G$ $\Po-$generic filter over $M$. By hypothesis, $M[G] \models \xdot_{G
	} = \ydot_{G}$. Let $r : M^{\Po} \to M^{\Po}_{fun}$ be the retraction as defined in the previous lemma. Then $M[G] \models \xdot = r(\xdot)$ and $M[G] \models \ydot = r(\ydot)$. Therefore, the next equality holds in $M[G]$
	$$r(\xdot) = \xdot = \ydot = r(\ydot).$$
	We conclude that $M[G] \models r(\xdot) = r(\ydot).$ Conversely, the argument is analogous. 
\end{proof}

In fact, there is a correspondence between a Boolean-valued model $M^\B$ {(see Definition \ref{M^Bdef})} and functional $\B-$names. To avoid ambiguity, we will denote the Boolean-valued model constructed inside of $M$ by $V^\B$ from now on. The following proposition composes the original contributions of this work.
\begin{Prop}\label{VBandfuncionalnames}
	Let $\B$ be a complete Boolean algebra. Then 
	$$V^{\B} \cong M^{\B}_{fun}.$$
\end{Prop}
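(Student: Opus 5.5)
We will prove that $V^{\B}$ (the Boolean-valued universe built inside $M$) and $M^{\B}_{fun}$ are related by a bijection defined by recursion that respects the structure on both sides. The plan is to compare the two hierarchies level by level. Elements of $V^{\B}$ are functions with values in $\B$, possibly taking the value $0$. Functional $\B$-names are functions with values in $\B\setminus\{0\}$ whose domain elements are again functional names. So the natural maps are not literally the identity. We define $\Phi : V^{\B} \to M^{\B}_{fun}$ by $\in$-recursion:
$$\Phi(x) = \{(\Phi(y), x(y)) : y \in dom(x),\ x(y) \neq 0\},$$
and $\Psi : M^{\B}_{fun} \to V^{\B}$ by $\Psi(\xdot) = \{(\Psi(\ydot),p) : (\ydot,p)\in \xdot\}$.

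The steps, in order, are as follows.

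\emph{Step 1.} We check by induction on rank that $\Psi(\xdot)\in V^{\B}_{\alpha+1}$ whenever $\xdot$ has rank $\alpha$. The image of each domain element lies at a lower level, and the values lie in $\B$, so $\Psi(\xdot)$ is a function into $\B$ with domain in $V^{\B}_\alpha$.

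\emph{Step 2.} We check that $\Phi(x)$ is a functional name. This needs $\Phi$ to be injective on $dom(x)$, so that $\Phi(x)$ is again a function. We prove injectivity of $\Phi$ together with the recursion, by simultaneous induction.

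\emph{Step 3.} We show $\Phi\circ\Psi = id$ and $\Psi\circ\Phi = id$ by $\in$-induction, just as in the computation $r\circ s = id$ in Lemma \ref{retractdef}.

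\emph{Step 4.} We verify that the isomorphism respects the Boolean values. For this we use $i(p)=p$ on $\B\setminus\{0\}$, which makes Definition \ref{forcingrelationbooleanalgebra} applicable. The point is that the recursive clauses of Definition \ref{BooleanValuesForV^B} are unaffected by deleting pairs with value $0$: such a pair contributes $\|x=t\|\cdot 0 = 0$ to a join, and $-0+\|t\in y\| = 1$ to a meet. Hence $\|\Phi(x)\in\Phi(y)\| = \|x\in y\|$ and $\|\Phi(x)=\Phi(y)\| = \|x=y\|$, by induction on the pair of ranks. So $\cong$ is an isomorphism of Boolean-valued structures, not just a bijection.

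The main obstacle is a mismatch that Step 3 exposes. $V^{\B}$ admits distinct elements such as $\emptyset$ and $\{(\emptyset,0)\}$, and both are sent by $\Phi$ to $\emptyset$. So $\Psi\circ\Phi = id$ fails literally, and injectivity in Step 2 fails for the same reason. There are two ways to handle this, and we would choose the first.

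The first is to read $V^{\B}$ in the statement as the subclass of elements that are hereditarily nowhere-zero. This is the natural reading, since Definition \ref{functionalnamesdef} treats $\B$-names as $(\B\setminus\{0\})$-names. On this subclass $\Phi$ and $\Psi$ are literally inverse: the inductive argument of Step 3 goes through, and the zero-deletion step is vacuous.

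The second, as a fallback, is to take $V^{\B}$ in full and interpret $\cong$ as a bijection between the full $V^{\B}$ taken modulo the equivalence $x\sim y$ iff $\Phi(x)=\Phi(y)$ and $M^{\B}_{fun}$. Step 4 shows that the Boolean values are invariant under this equivalence, so the quotient inherits them. In this version $\Phi$ is the quotient map, $\Psi$ is a section of it, and $M^{\B}_{fun}$ is the quotient.

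In either reading, Step 4 is what makes the identification meaningful for the forcing comparison that follows.
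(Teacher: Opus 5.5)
Your main line is the paper's own construction. The paper defines, by recursion on the levels of $V^{\B}$, essentially $f(v)=\{(f(u),v(u)) : u\in dom(v)\}$, which unwinds to the identity, and proves injectivity and surjectivity by induction. Your $\Psi$, and your $\Phi$ restricted to hereditarily nowhere-zero elements, are that same map.

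Your diagnosis is correct, and it locates a gap in the paper's own argument. The paper never discards pairs with value $0$, and its check that $f_\alpha(v)$ is a functional $\B$-name only verifies that $f_\alpha(v)$ is a function. It never checks that the values lie in $\B\setminus\{0\}$. So for $v=\{(\emptyset,0)\}$ the paper's $f(v)$ is not in $M^{\B}_{fun}$, and the literal claim fails for this map.

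Your first reading is exactly what the paper's induction actually proves. Be aware that under it the proposition reduces to the tautology that hereditarily nowhere-zero elements of $V^{\B}$ are functional names, and your Step 4 has no content there.

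Your fallback reading has a genuine gap: $\Phi$ as you define it does not map $V^{\B}$ into $M^{\B}_{fun}$. Deleting zero-valued pairs can identify distinct domain elements that carry different nonzero values. With $a\neq b$ both nonzero, $x=\{(\emptyset,a),(\{(\emptyset,0)\},b)\}\in V^{\B}$ gives $\Phi(x)=\{(\emptyset,a),(\emptyset,b)\}$, which is not a function. So on the full $V^{\B}$, Step 2 fails already at $\Phi(x)$ being a function, not only at injectivity.

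The repair is to merge values by joins, as in the retraction $r$ of Lemma \ref{retractdef}: set $\Phi(x)(s)=\bigvee\{x(y) : y\in dom(x),\ x(y)\neq 0,\ \Phi(y)=s\}$. Then $\Phi$ is a retraction of $V^{\B}$ onto $M^{\B}_{fun}$, and your Step 4 survives:
\begin{itemize}
\item the join in $\|z\in\Phi(y)\|$ distributes over the merged elements;
\item the identity $-\bigvee_{t}x(t)+c=\bigwedge_{t}(-x(t)+c)$ handles the meet in $\|\Phi(x)\subseteq\Phi(y)\|$.
\end{itemize}
So $\|\Phi(x)\in\Phi(y)\|=\|x\in y\|$ and $\|\Phi(x)=\Phi(y)\|=\|x=y\|$ follow by induction on pairs of ranks. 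With this fix, $V^{\B}/\!\sim\;\cong M^{\B}_{fun}$ as Boolean-valued structures. That is a non-trivial correct version of the proposition. It is also the version actually needed for the quantifier cases of Corollary \ref{osquadradinhossaoiguais}, where $\|\cdot\|$ quantifies over all of $V^{\B}$ but $\lb\cdot\rb$ only over $M^{\B}_{fun}$.
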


\begin{proof}
	The idea is to construct the function with recursion on $\alpha \in Ord$, and prove by induction the bijection. For each ordinal $\alpha$, we want to define a function $$f_{\alpha}: V_{\alpha}^{\\B}\setminus \underset{\beta < \alpha}{\bigcup} V^{\B}_{\beta} \to M^{\B}_{fun},$$ paste them together and obtain a bijective function $$f : V^{\B} \to M^{\B}_{fun}.$$
	For $\alpha = 0$, $V_{0}^{\B} = \emptyset$. Then, define $f_{0} = \emptyset.$ In particular, $f_{0}$ is injective. Before we state the recursion assumption, we present how to define $f_{\alpha}$ if for all $\beta < \alpha$, $f_{\beta}$ is defined. Given $v \in V_{\alpha}^{\B} \setminus \underset{\beta < \alpha }{\bigcup}V^{\B}_{\beta}$, for each $u \in dom(v) $, let $\alpha_{u}$ be the first ordinal $\beta <\alpha$ in which $u \in V_{\beta}^{\B}$. Then, define: 
	$$ f_{\alpha}(v)  = \left\{\begin{array}{rll} \{(f_{\alpha_{u}}(u),v(u)): u \in dom(v)\},   & \hbox{if} & \alpha = \alpha_{u}. \\ f_{\alpha_{u}}(v),  & \hbox{if} & \alpha_{u} < \alpha .\end{array}\right. $$
	
	That being said, our recursion assumption will be: Suppose that for all $\beta < \alpha$, $$f_{\beta}: V_{\beta}^{\B} \setminus \underset{\gamma < \beta}{\bigcup} V_{\gamma} \to M^{\B}_{fun}$$ is injective  and for all $\beta_{1},\beta_{2} < \alpha$, if $\beta_{1} \neq \beta_{2}$ with $\alpha_{u} = \beta_{1}$ and $\alpha_{u'} = \beta_{2}$, $f_{\beta_{1}}(u) \neq f_{\beta_{2}}(u')$. Note that 
	
	\begin{enumerate}
		\item For all $v \in V^{\B}_{\alpha} \setminus \underset{\beta < \alpha}{\bigcup} V^{\B}_{\beta}$, $f_{\alpha}(v)$ is a functional $\B-$name. In fact, suppose that $(\xdot, p_{1}),(\xdot,p_{2}) \in f_{\alpha}(v)$. Then, there exists $u_{1},u_{2} \in dom(v)$ so that $$f_{\alpha_{u_{1}}}(u_{1}) = \xdot = f_{\alpha_{u_{2}}}(u_{2}).$$
		If $\alpha_{u_{1}} = \alpha_{u_{2}}$, then $f_{\alpha_{1}} = f_{\alpha_{2}}$. By the injective assumption, $u_{1} = u_{2}$ therefore $p_{1} = v(u_{1}) = v(u_{2}) = p_{2}.$ Otherwise, if $\alpha_{u_{1}} \neq \alpha_{u_{2}}$, then $$\xdot = f_{\alpha_{u_{1}}}(u_{1}) \neq f_{\alpha_{u_{2}}}(u_{2}) = \xdot,$$
		contradiction.
		\item $f_{\alpha}$ is injective, because given $v,v' \in V^{\B}_{\alpha} \setminus \underset{\beta < \alpha}{\bigcup} V^{\B}_{\beta}$ such that $v \neq v'$, $v$ and $v'$ differs as functions, therefore $f_{\alpha}(v) \neq f_{\alpha}(v')$.
	\end{enumerate}
	
	Then, define
	$$f = \underset{\alpha \in Ord}{\bigcup} f_{\alpha}: V^{\B} \to M^{\B}_{fun}.$$      
	
	Note that $f$ is injective because it is defined as a union of $2$-by-$2$ disjoint injective functions. It remains to show that $f$ is surjective.  Let $\xdot \in M^{\B}_{fun}$. By recursion on the well-founded relation $\in$, suppose that for all $\ydot \in dom(\xdot)$, exists a unique $\alpha_{\ydot}$ and a unique $u_{\ydot} \in V^{\B}_{\alpha_{\ydot}} \setminus \underset{\beta < \alpha_{\ydot}}{\bigcup} V^{\B}_{\beta}$ so that $f_{\alpha_{\ydot}}(u_{\ydot}) = \ydot$. Then, define 
	$$v_{\xdot} = \{(u_{\ydot}, p_{\ydot}): f_{\alpha_{\ydot}}(u_{\ydot}) = \ydot \text{ and }(\ydot,p_{\ydot}) \in \xdot\}.$$
	Then, we have $$f(v_{\xdot}) = \{(f_{\alpha_{\ydot}}(u_{\ydot}), v_{\xdot}(u_{\ydot})): u_{\ydot} \in v_{\xdot}\} = \xdot.$$
\end{proof}

Remember the definition of $i^{*}$ (see Definition \ref{i*definition}). Let $g: M^{\B}_{fun} \to V^{\B}$ be the inclusion function. With Proposition \ref{VBandfuncionalnames}, we have then completed the following diagram:

\begin{center}
	\begin{tikzcd}
		M^{\Po} \arrow[rr, "i_{*}"]                               &  & M^{\B \setminus \{\emptyset\}} \arrow[dd, "r", shift left]                                          \\
		&  &                                                                             \\
		M^{\Po}_{fun} \arrow[uu, hook] \arrow[rr, "i_{*}"] &  & M^{\B}_{fun} \arrow[dd, "g", shift left] \arrow[uu, "s", shift left] \\
		&  &                                                                             \\
		&  & V^{\B} \arrow[uu, "f", shift left]                                         
	\end{tikzcd}  
	
\end{center}

Using the definition of forcing with posets $\forces$, it is possible to assign Boolean-values to formulas. From this perspective, to some extent, one would say that assuming $\forces$, the definition of $\Forces$ becomes a theorem.

\begin{Def}\label{quadradinhokunen}
	 Let $B$ be a complete Boolean algebra. Consider the forcing poset $B \setminus \{0\}$. Given a sentence $\phi$ of forcing language, define
	$$ \lb \phi \rb =\bigvee \{b \in B \setminus \{0\} :  b \forces \phi\}.$$
\end{Def}

In particular, $\lb \phi \rb$ is the greatest element of $B \setminus \{0\}$ that forces $\phi$.

\begin{Lemma}\label{Booleanvalueforces}
	Let $B$ be a complete Boolean algebra. Let $\phi$ be a sentence of forcing language. For all $b \in B \setminus \{0\}$. Then
	$$b \forces \phi \text{ iff }~ b \leq \lb \phi \rb.$$
\end{Lemma}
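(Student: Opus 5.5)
The plan is to prove both directions separately. The forward direction is immediate from Definition \ref{quadradinhokunen}, and the reverse direction reduces to a density argument below $b$, using the basic properties of $\forces$.

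For ($\Rightarrow$): if $b \forces \phi$, then $b$ belongs to the set $\{c \in B\setminus\{0\} : c \forces \phi\}$ whose supremum is $\lb \phi \rb$. Hence $b \leq \lb \phi \rb$ directly, with no further work.

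For ($\Leftarrow$), two standard facts about $\forces$ on an arbitrary forcing poset will be invoked. Both follow from Lemma \ref{forcingwithposetslemma}, or by a routine induction on the clauses of Definition \ref{forcingestreladef}.
\begin{enumerate}
\item \emph{Monotonicity:} if $p \forces \phi$ and $q \leq p$, then $q \forces \phi$.
\item \emph{Density:} if $\{q \leq p : q \forces \phi\}$ is dense below $p$, then $p \forces \phi$.
\end{enumerate}
For density via generics: if $p \not\forces \phi$, then some generic $G \ni p$ has $M[G]\models\neg\phi$. Lemma \ref{forcingwithposetslemma} then gives $p' \in G$ with $p' \forces \neg\phi$. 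Choose $q \in G$ below $p,p'$, and then some $q' \leq q$ forcing $\phi$. This $q'$ also forces $\neg\phi$, which is absurd.

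Now assume $b \leq \lb \phi \rb$ and let $c \leq b$ be nonzero. Then $c = c\cdot \lb\phi\rb = \bigvee\{c\cdot d : d \forces \phi\}$ by the infinite distributive law in complete Boolean algebras. Since $c \neq 0$, some $d$ with $d \forces \phi$ satisfies $c\cdot d \neq 0$. The element $e = c\cdot d$ lies in $B\setminus\{0\}$, satisfies $e \leq c$, and by monotonicity $e \forces \phi$. So the set of conditions forcing $\phi$ is dense below $b$, and density gives $b \forces \phi$.

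The main obstacle is justifying the density principle and the infinite distributive law cleanly in this setting. The distributive law $c\cdot\bigvee X = \bigvee\{c\cdot x : x\in X\}$ holds in any complete Boolean algebra because $c\cdot -$ is a left adjoint. I would prove density via the generic-filter argument above rather than by induction on all nine clauses. That argument needs generic filters containing a given condition, which exist since $M$ is countable. As a corollary, the remark that $\lb\phi\rb$ itself forces $\phi$ whenever it is nonzero follows by taking $b = \lb\phi\rb$.
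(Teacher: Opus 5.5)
Your proof is correct and is essentially the standard argument behind the paper's citation of Kunen's Lemma IV.4.19. The forward direction is just the definition of $\lb\phi\rb$ as a supremum, and the reverse direction uses $c = \bigvee\{c\cdot d : d \forces \phi\}$ to show that $\{c \le b : c \forces \phi\}$ is dense below $b$, then applies monotonicity and the density property of $\forces$.

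One caution: Lemma \ref{forcingwithposetslemma} as stated gives only the truth lemma, so it does not yield your step ``if $p \not\forces \phi$ then some generic $G \ni p$ has $M[G] \models \neg\phi$''. That step is the semantic-to-syntactic half of the forcing theorem, which, given the truth lemma, is equivalent to the very density property you are deriving; monotonicity via generics needs it too. So justify both facts by the induction on the clauses of Definition \ref{forcingestreladef}, or by citing the full equivalence of $\forces$ with semantic forcing.
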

\begin{proof}
	See \cite{Kunen11} Lemma IV.4.19.
\end{proof}

As the reader may have noticed, the notation $\lb \phi \rb$ is similar to the Boolean-value of a formula $|| \phi ||$ {(see Definition \ref{Booleanvaluedmodeldef})}. For the atomic cases, the similarity is even stronger.
\begin{Lemma}\label{quadradinhodokunenparaatomicas}
	Let $B$ a complete Boolean algebra. Given $\xdot, \ydot \in M^{B\setminus \{0\}}$,
	\begin{enumerate}
		\item $\lb \xdot \in \ydot \rb  = \underset{t \in dom(\ydot)}{\bigvee} ( \lb t = \xdot \rb \cdot \ydot(t)).$
		\item $\lb \xdot = \ydot \rb = \underset{t \in dom(\xdot) \cup dom(\ydot)}{\bigwedge} (\lb t \in \xdot \rb \iff \lb t \in \ydot \rb).$
	\end{enumerate}
\end{Lemma}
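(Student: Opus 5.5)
We prove both identities through the characterization of Lemma \ref{Booleanvalueforces}: since $\lb \phi \rb$ is the largest element forcing $\phi$, two Boolean values $u,v$ coincide as soon as, for every $b \in B\setminus\{0\}$, we have $b \leq u$ iff $b \leq v$. The plan is therefore to take the right-hand side $R$ of each identity and show that $b \forces \phi$ holds exactly when $b \leq R$, using the clauses of Definition \ref{forcingestreladef}. Two facts about a complete Boolean algebra are used throughout. First, for a family $\{a_j\}$ we have $b \leq \bigvee_j a_j$ iff the set $\{q \leq b : \exists j\, (q \leq a_j)\}$ is dense below $b$. Second, $b \leq \lb\phi\rb$ iff no nonzero $q\leq b$ satisfies $q \leq -\lb\phi\rb$. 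Here $\ydot(t)$ stands for $\bigvee\{p : (t,p)\in \ydot\}$, so for functional names it is literally the function value.

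For (1), start from clause 2 of Definition \ref{forcingestreladef}. We have $b \forces \xdot \in \ydot$ iff the set $D$ of $q \leq b$ such that there is $(t,p) \in \ydot$ with $q \leq p$ and $q \forces \xdot = t$ is dense below $b$. By Lemma \ref{Booleanvalueforces}, the condition $q \forces \xdot = t$ becomes $q \leq \lb t = \xdot\rb$, using symmetry of forced equality. So $D$ is the set of $q\leq b$ lying below some $\lb t=\xdot\rb\cdot p$ with $(t,p)\in\ydot$. By the density fact, $D$ is dense below $b$ iff $b \leq \bigvee_{(t,p)\in\ydot}\lb t=\xdot\rb\cdot p$. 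Grouping the pairs with the same $t$ and distributing the meet over the join gives $\bigvee_{t\in dom(\ydot)}\lb t=\xdot\rb\cdot\ydot(t)$, as required.

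For (2), set $c_t = (\lb t\in\xdot\rb \iff \lb t\in\ydot\rb)$, the Boolean biconditional. By definition, $b \forces \xdot=\ydot$ means that for all $t$ in the domains and all $q\leq b$, $q\forces t\in\xdot$ iff $q \forces t\in\ydot$. By Lemma \ref{Booleanvalueforces} this reads: for all $q \leq b$, $q\leq\lb t\in\xdot\rb$ iff $q\leq\lb t\in\ydot\rb$. We then need to show that this is equivalent to $b\leq c_t$. Once that holds for each $t$, it gives $b\leq\bigwedge_t c_t$.

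The main obstacle is the direction from the pointwise condition to $b\leq c_t$. Write $u=\lb t\in\xdot\rb$ and $v=\lb t\in\ydot\rb$. If $b\not\leq c_t$, then $b\cdot u\cdot(-v)\neq 0$ or $b\cdot(-u)\cdot v\neq 0$. In the first case, $q=b\cdot u\cdot -v$ satisfies $q\leq u$ but $q\not\leq v$, which violates the pointwise condition. The second case is symmetric. Conversely, if $b\leq c_t$ and $q\leq b$ with $q\leq u$, then $q\leq u\cdot c_t\leq v$. The remaining point is to check that the forcing clause really quantifies over every $q\leq b$ in $B\setminus\{0\}$, so that the element $b\cdot u\cdot -v$ is admissible. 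It is, because the forcing poset is the whole of $B\setminus\{0\}$.
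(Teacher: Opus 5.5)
Your proof is correct. There is no competing route in the paper to compare it with: the paper gives no argument for this lemma and only refers to Kunen, Exercise IV.4.24. What you have written is a self-contained solution of that exercise. It uses only Lemma \ref{Booleanvalueforces}, the two atomic clauses of Definition \ref{forcingestreladef}, and infinite distributivity in a complete Boolean algebra. Distributivity is what justifies your density fact that $b\leq\bigvee_j a_j$ iff $\{q\leq b:\exists j\,(q\leq a_j)\}$ is dense below $b$.

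Both of your key steps are sound. Equality of two Boolean values does follow from their having the same nonzero elements below them. In part (2), the case split on
$$b\cdot(-c_t)=b\cdot u\cdot(-v)+b\cdot(-u)\cdot v$$
correctly produces a nonzero witness $q\leq b$ whenever $b\not\leq c_t$.

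Compared with the bare citation, your version has two advantages. First, it fixes what $\ydot(t)$ means when $\ydot$ is not functional, namely $\bigvee\{p:(t,p)\in\ydot\}$, with the grouping step justified by distributivity; the paper's statement leaves this implicit. Second, it makes visible that the identities hold for arbitrary $(B\setminus\{0\})$-names, before the paper specializes to functional names in Lemma \ref{Equalityofboolanvalues}.
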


\begin{proof}
	See \cite{Kunen11} Exercise IV.4.24.
\end{proof}
The next step is to show when we can replace names for functional names, without any loss.

\begin{Def}
	We call by \textbf{forcing language with functional  $\Po$-names} the usual forcing language with posets $\Po$ in which we add only the functional names as constants.
\end{Def}
From Definition \ref{quadradinhokunen}, it is possible to derive the exact form for $\lb \phi \rb$ more complex formulas. Again, we emphasize the similarity with $||\phi||$ in {Definition \ref{Booleanvaluedmodeldef}}.

\begin{Lemma}\label{quadradinhokunenproperty}
	Let $B$ be a complete Boolean algebra. Let $\phi$ and $\psi$ be sentences of the forcing language with functional $(B \setminus \{0\})-$names. Then
	\begin{enumerate}
		\item $\lb \neg \phi \rb = - \lb \phi \rb.$
		
		\item $\lb \phi \land \psi \rb = \lb \phi \rb \cdot \lb \psi \rb.$
		
		\item $\lb \phi \lor \psi \rb = \lb \phi \rb + \lb \psi \rb.$
		
		\item $\lb \phi \to \psi \rb = \lb \neg \phi \lor \psi \rb$.
		
		\item $\lb \phi \leftrightarrow \psi \rb = \lb(\phi \to \psi)\land(\psi \to \phi) \rb$.
		
		\item $\lb \forall x \phi(x) \rb = \bigwedge \{\lb \phi(\xdot)\rb: \xdot \in M^{B}_{fun}\}.$
		
		\item $\lb \exists x \phi(x) \rb = \bigvee \{\lb \phi(\xdot) \rb: \xdot \in M^{B}_{fun} \}$.
	\end{enumerate}
\begin{proof}
	See \cite{Kunen11} Exercise IV.4.20 and Exercise IV.4.21.
\end{proof}
\end{Lemma}

Based on the Lemma \ref{quadradinhokunen} and Lemma \ref{quadradinhokunenproperty} , $|| - ||$ and $ \lb - \rb$ should share something in common. We will see that this is actually the case. Remember that $V^B$ can be viewed as a generalization of the universe of Set Theory, and its elements are not just functions but characteristic functions. Then for all $u \in V^B$, $u(x) = 0$ if $x \not \in dom(u)$. This convention appears in the textbooks, like in  \cite{Bell05} when the author proves that $V^B$ satisfies the axioms of Zermelo-Fränkel Set Theory. 
\begin{Lemma}\label{pricipiodomaximo}
	Let $B$ be a complete Boolean algebra. For all $u \in V^B$
	
	$$\underset{t \in V^B}{\bigvee} u(t) = \underset{t \in V^B}{\bigvee} ||t \in u ||.$$
\end{Lemma}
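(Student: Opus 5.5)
We prove the equality $\bigvee_{t \in V^B} u(t) = \bigvee_{t \in V^B} ||t \in u||$ by showing the two inequalities, using the convention that $u(t) = 0$ for $t \notin dom(u)$. Under this convention the left-hand side is simply $\bigvee_{t \in dom(u)} u(t)$, since the terms with $t \notin dom(u)$ contribute $0$. For the right-hand side we unfold Definition \ref{BooleanValuesForV^B}: for every $t \in V^B$,
$$||t \in u|| = \bigvee_{s \in dom(u)} (||t = s|| \cdot u(s)).$$

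\textbf{Inequality $\leq$.} Fix $t \in dom(u)$. Taking $s = t$ in the supremum above gives $||t \in u|| \geq ||t = t|| \cdot u(t)$. Since $V^B$ is a Boolean-valued model, $||t=t|| = 1$ by condition 1 of Definition \ref{Booleanvaluedmodeldef}, so $u(t) \leq ||t \in u||$. For $t \notin dom(u)$ we have $u(t) = 0$, so the bound holds trivially. Taking suprema over all $t \in V^B$ then yields the left side $\leq$ the right side.

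\textbf{Inequality $\geq$.} For any $t \in V^B$, each term of the supremum satisfies $||t=s|| \cdot u(s) \leq u(s)$. Hence
$$||t \in u|| \leq \bigvee_{s \in dom(u)} u(s) \leq \bigvee_{s \in V^B} u(s).$$
Taking the supremum over $t$ gives the reverse inequality.

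The only delicate point is the well-definedness of these suprema, since they range over the proper class $V^B$ and a complete Boolean algebra only guarantees joins of sets. This is resolved by observing that each class-indexed join is really a join of a subset of $B$, namely $\{u(t) : t \in V^B\}$ and $\{||t \in u|| : t \in V^B\}$. Both are subsets of $B$, so their suprema exist by completeness and the computation above is legitimate. We also need $||t=t||=1$ for arbitrary $t \in V^B$, which is supplied by the Proposition stating that $V^B$ is a Boolean-valued model.
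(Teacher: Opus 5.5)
Your proof is correct and follows the paper's argument. The inequality $\leq$ comes from the term $s=t$ together with $||t=t||=1$, giving $u(t) \leq ||t \in u||$, and the reverse comes from bounding $||t=s|| \leq 1$ inside the join defining $||t \in u||$. Your remarks on the $0$-convention for $t \notin dom(u)$ and on the class-indexed joins being joins of subsets of $B$ spell out details the paper leaves implicit.
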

\begin{proof}
	By definition, for all $t \in dom(u)$, $u(t) \leq || t \in u||$. Therefore
	$$\underset{t \in V^B}{\bigvee} u(t) \leq \underset{t \in V^B}{\bigvee} || t \in u ||.$$
	On the other hand, $|| t \in u|| = \underset{s \in dom(u)}{\bigvee} ||t = s||\cdot u(s)$. Besides that, $v \leq 1$ for all $v \in V^B$. Thus
	\begin{align*}
		\underset{t \in V^B}{\bigvee} || t \in u || &\leq \underset{t \in V^B}{\bigvee}\left(\underset{s \in dom(u)}{\bigvee} ||t= s|| \cdot u(s)\right) \\
		&\leq \underset{t \in V^B}{\bigvee}\left(\underset{s \in dom(u)}{\bigvee} 1 \cdot u(s)\right) \\
		& = \underset{t \in V^B}{\bigvee} u(t).
	\end{align*}
\end{proof}

Given a complete Boolean algebra $B$, every functional $B$-name is in particular an element of $V^B$.
\begin{Lemma}\label{Equalityofboolanvalues}
	Let $B$ be a complete Boolean algebra. For all $\xdot, \ydot \in M^{B}_{fun}$
	\begin{enumerate}
		\item $\lb \xdot \in \ydot \rb = || \xdot \in \ydot ||$.
		\item $\lb \xdot = \ydot \rb = || \xdot = \ydot ||$.
	\end{enumerate}
\end{Lemma}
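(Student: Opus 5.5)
We argue by simultaneous induction on the pair $(\rho(\xdot),\rho(\ydot))$, the same well-founded recursion used in Definition \ref{BooleanValuesForV^B} to define $||\xdot\in\ydot||$ and $||\xdot=\ydot||$. By Proposition \ref{VBandfuncionalnames}, every functional $B$-name is an element of $V^B$, so both sides make sense. For such a name, $dom(\ydot)$ consists of functional names and $\ydot(t)$ is the unique $p$ with $(t,p)\in\ydot$. The inductive hypothesis is that items 1 and 2 hold for all pairs of strictly smaller rank, in particular for all pairs $(t,\xdot)$ and $(t,\ydot)$ with $t$ in $dom(\xdot)\cup dom(\ydot)$.

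\emph{Membership.} By Lemma \ref{quadradinhodokunenparaatomicas}(1),
$$\lb \xdot\in\ydot\rb=\bigvee_{t\in dom(\ydot)}\lb t=\xdot\rb\cdot\ydot(t).$$
Here $t$ has lower rank than $\ydot$. The induction hypothesis together with symmetry of $||\cdot=\cdot||$ gives $\lb t=\xdot\rb=||t=\xdot||=||\xdot=t||$. The right-hand side therefore becomes exactly $||\xdot\in\ydot||$ from Definition \ref{BooleanValuesForV^B}.

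\emph{Equality.} By Lemma \ref{quadradinhodokunenparaatomicas}(2),
$$\lb\xdot=\ydot\rb=\bigwedge_{t\in dom(\xdot)\cup dom(\ydot)}\bigl(\lb t\in\xdot\rb\leftrightarrow\lb t\in\ydot\rb\bigr).$$
The induction hypothesis (applied at lower rank) replaces each bracket by $||t\in\xdot||$ and $||t\in\ydot||$. It remains to show the Boolean identity
$$\bigwedge_{t\in dom(\xdot)\cup dom(\ydot)}\bigl(||t\in\xdot||\leftrightarrow||t\in\ydot||\bigr)=||\xdot\subseteq\ydot||\cdot||\ydot\subseteq\xdot||.$$
Split each biconditional as $(-||t\in\xdot||+||t\in\ydot||)\cdot(-||t\in\ydot||+||t\in\xdot||)$. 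Then group the meet into the two families of implications, and compare each family with $||\xdot\subseteq\ydot||=\bigwedge_{t\in dom(\xdot)}(-\xdot(t)+||t\in\ydot||)$.

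\emph{The main obstacle.} This comparison is the step I expect to be hardest. It reduces to two facts valid in any Boolean-valued model (Definition \ref{Booleanvaluedmodeldef}):

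\textbf{(a)} For $t\in dom(\xdot)$, we have $\xdot(t)\le||t\in\xdot||$, so $-||t\in\xdot||+||t\in\ydot||\le-\xdot(t)+||t\in\ydot||$. Restricting the meet to $t\in dom(\xdot)$ then gives ``$\le$''.

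\textbf{(b)} For arbitrary $t$ (including $t\in dom(\ydot)\setminus dom(\xdot)$), we have $||\xdot\subseteq\ydot||\cdot||t\in\xdot||\le||t\in\ydot||$. To show this, expand $||t\in\xdot||=\bigvee_{s\in dom(\xdot)}||t=s||\cdot\xdot(s)$. Use $||\xdot\subseteq\ydot||\cdot\xdot(s)\le||s\in\ydot||$, and then the substitution axiom 4 of Definition \ref{Booleanvaluedmodeldef}, which gives $||t=s||\cdot||s\in\ydot||\le||t\in\ydot||$. This yields ``$\ge$''.

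Doing the same with the roles of $\xdot$ and $\ydot$ swapped closes the equality case, and hence the induction. The only point to watch is that the functional structure is what lets $\ydot(t)$ coincide with the $p$ appearing in Kunen's formula, which is why we restrict to $M^{B}_{fun}$.
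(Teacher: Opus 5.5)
Your argument is correct in substance, and on the key step it takes a different route from the paper. After assuming item 1, the paper does the following:
\begin{enumerate}
\item it rewrites $-\bigwedge_t(\lb t\in\xdot\rb\Rightarrow\lb t\in\ydot\rb)$ as $\bigvee_t(\lb t\in\xdot\rb\cdot-\lb t\in\ydot\rb)$;
\item it factors this as $\bigvee_t\lb t\in\xdot\rb\cdot\bigvee_t(-\lb t\in\ydot\rb)$;
\item it replaces the first factor by $\bigvee_t\xdot(t)$ via Lemma \ref{pricipiodomaximo};
\item it merges the two joins back into $\bigvee_t(\xdot(t)\cdot-||t\in\ydot||)$.
\end{enumerate}
Both the factoring and the merging use $\bigvee_t(a_t\cdot b_t)=\bigvee_t a_t\cdot\bigvee_t b_t$, which holds only as $\le$. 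For instance, take $\ydot=\xdot=\{(\emptyset,u),(\{(\emptyset,1)\},-u)\}$ with $u\neq 0,1$: the unfactored join is $0$, but the factored product is $1$. Your inequalities (a) and (b) rest on $\xdot(t)\le||t\in\xdot||$ and on the substitution axiom. They soundly establish $\bigwedge_{t\in dom(\xdot)\cup dom(\ydot)}(||t\in\xdot||\Rightarrow||t\in\ydot||)=||\xdot\subseteq\ydot||$. This is exactly the identity the paper's chain is aiming for, and you get it without Lemma \ref{pricipiodomaximo} and without any illegitimate distributivity. Your membership step and your remark about why functional names are needed are also fine.

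What needs repair is the induction bookkeeping. Your hypothesis treats item 1 at every $(t,\xdot)$ and $(t,\ydot)$, with $t\in dom(\xdot)\cup dom(\ydot)$, as holding at pairs strictly below $(\xdot,\ydot)$. No well-founded order on pairs can support this. At a pair $(t_0,z)$ with $t_0\in dom(z)$, it would require $(t_0,z)$ to lie strictly below itself. This is not a harmless technicality: Lemma \ref{quadradinhodokunenparaatomicas}(2) for $\lb t_0=z\rb$ really does contain the term $\lb t_0\in z\rb$.

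The fix is to prove item 2 alone by induction on $\max(\rho(\xdot),\rho(\ydot))$. For $t\in dom(\xdot)\cup dom(\ydot)$, unfold $\lb t\in\xdot\rb=\bigvee_{s\in dom(\xdot)}\lb s=t\rb\cdot\xdot(s)$ by Lemma \ref{quadradinhodokunenparaatomicas}(1), and do the same for $\ydot$. Here $s$ and $t$ both have rank below the maximum. So the hypothesis, together with symmetry of Boolean equality, turns this into $||t\in\xdot||$. Your identity then gives $\lb\xdot=\ydot\rb=||\xdot=\ydot||$. Item 1 follows afterwards for all pairs, exactly by your membership computation. The paper's phrase ``supposing that 1.\ is true'' leaves this same point implicit.
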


\begin{proof}
	First, item $1.$ follows if we prove that the item $2.$ holds. Therefore, we only need to prove $2.$, supposing that $1.$ is true. We will use the characterization of $\lb - \rb$ for atomic formulas (see Lemma \ref{quadradinhodokunenparaatomicas}). Remember the definition of operator $\implies$ in a Boolean algebra (see the discussion after {Definition \ref{Booleanalgebradef}}). Note that:
	
	

	\begin{align*}
		- \underset{t \in dom(\xdot)\cup dom(\ydot)}{\bigwedge}( \lb t \in \xdot \rb \implies \lb t \in \ydot \rb) &= \underset{t \in dom(\xdot)\cup dom(\ydot)}{\bigvee} - ( - \lb t \in \xdot \rb + \lb t \in \ydot \rb) \\
		& =\underset{t \in dom(\xdot)\cup dom(\ydot)}{\bigvee} (\lb t \in \xdot \rb \cdot (-\lb t \in \ydot \rb)) \\
		& = \underset{t \in dom(\xdot)\cup dom(\ydot)}{\bigvee} \lb t \in \xdot \rb \cdot \underset{t \in dom(\xdot)\cup dom(\ydot)}{\bigvee}(-\lb t \in \ydot \rb) \\
		& \overset{**}{=} \underset{t \in dom(\xdot)\cup dom(\ydot)}{\bigvee} \xdot(t) \cdot \underset{t \in dom(\xdot)\cup dom(\ydot)}{\bigvee} (- \lb t \in \ydot \rb) \\ 
		&= \underset{t \in dom(\xdot)\cup dom(\ydot)}{\bigvee}(\xdot(t) \cdot (- || t \in \ydot ||)) \\
		&= - \underset{t \in dom(\xdot)\cup dom(\ydot)}{\bigwedge}(\xdot(t) \implies || t \in \ydot||).
	\end{align*}
	
	In (**), we used Lemma \ref{pricipiodomaximo}. We conclude that 
	\begin{align*}
		\underset{t \in dom(\xdot)\cup dom(\ydot)}{\bigwedge} (\lb t \in \xdot \rb \implies \lb t \in \ydot \rb)
		&= \underset{t \in dom(\xdot) \cup dom(\ydot)} {\bigwedge} \xdot(t) \implies ||t \in \ydot||. \\
		& = \underset{t \in dom(\xdot)}{\bigwedge} \xdot(t) \implies || t \in \ydot ||.
	\end{align*}
	Therefore,
	\begin{align*}
		||\xdot = \ydot|| &= (\underset{t \in dom(\xdot)}{\bigwedge} \xdot(t) \implies || t \in \ydot||) \cdot (\underset{t' \in dom(\ydot)}{\bigwedge} \ydot(t') \implies ||t' \in \ydot||) \\
		&= (\underset{t \in dom(\xdot) \cup dom(\ydot)}{\bigwedge}\lb t \in \xdot \rb \implies \lb t \in \ydot \rb)\cdot(\underset{t' \in dom(\xdot)\cup dom(\ydot)}{\bigwedge} \lb t' \in \ydot \rb \implies \lb t' \in \xdot \rb) \\
		&= \underset{t \in dom(\xdot) \cup dom(\ydot)}{\bigwedge}(\lb t \in \xdot \rb \implies \lb t \in \ydot \rb) \cdot (\lb t \in \ydot \rb \implies \lb t \in \xdot \rb). \\
		&= \lb \xdot = \ydot \rb.
	\end{align*}
\end{proof}

\begin{Corolary}\label{osquadradinhossaoiguais}
	Let $B$ a complete Boolean algebra. For all formulas $\phi$ of the forcing language with $(B \setminus \{0\})-$names,
	$$\lb \phi \rb = ||\phi||.$$
\end{Corolary}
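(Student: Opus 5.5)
The plan is to argue by induction on the complexity of $\phi$, after first reducing to functional names. Given a formula $\phi(\xdot_{1},\dots,\xdot_{n})$ with arbitrary $(B\setminus\{0\})$-names, Lemma \ref{retractionpreservesforcing} says that, for every $b \in B\setminus\{0\}$, $b \forces \phi(\xdot_1,\dots,\xdot_n)$ iff $b \forces \phi(r(\xdot_1),\dots,r(\xdot_n))$. Hence, by Definition \ref{quadradinhokunen},
$$\lb \phi(\xdot_1,\dots,\xdot_n)\rb = \lb \phi(r(\xdot_1),\dots,r(\xdot_n))\rb.$$
The Boolean value $||\cdot||$ is read through the identification of Proposition \ref{VBandfuncionalnames}, with $V^B \cong M^B_{fun}$. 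So it suffices to prove $\lb \phi \rb = ||\phi||$ when all constants are functional names, where $\lb - \rb$ and $||-||$ are evaluated on the same objects.

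For this functional case I induct on $\phi$.

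\textbf{Atomic formulas.} The cases $\xdot \in \ydot$ and $\xdot = \ydot$ are exactly Lemma \ref{Equalityofboolanvalues}.

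\textbf{Connectives.} For $\neg,\land,\lor,\to,\leftrightarrow$ I compare Lemma \ref{quadradinhokunenproperty} (items 1--5) with the recursive clauses defining $||\cdot||$ after Definition \ref{Booleanvaluedmodeldef}. In each case $\lb\cdot\rb$ and $||\cdot||$ are the same Boolean operation applied to the values of the subformulas, which agree by the induction hypothesis. For $\leftrightarrow$ one should read the clause as $(\phi\to\psi)\land(\psi\to\phi)$, correcting the evident typo in the displayed definition.

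\textbf{Quantifiers.} Lemma \ref{quadradinhokunenproperty} gives
$$\lb\exists x\,\phi(x)\rb=\bigvee_{\xdot\in M^B_{fun}}\lb\phi(\xdot)\rb,$$
while by definition $||\exists x\,\phi(x)||=\bigvee_{u\in V^B}||\phi(u)||$. Via the bijection $f$ of Proposition \ref{VBandfuncionalnames}, these two index sets correspond. The induction hypothesis applied to each instance $\phi(\xdot)$, which again has only functional constants, gives termwise equality, so the joins agree. The case $\forall$ is the same argument with meets.

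I expect the main obstacle to be the quantifier step, specifically ensuring that the correspondence $V^B\cong M^B_{fun}$ preserves the atomic values. The isomorphism $f$ must not merely be a bijection: it must satisfy $||f(u)\in f(v)||=||u\in v||$, and similarly for $=$. This holds because $f$ is defined so that $f(v)$ pairs $f(u)$ with $v(u)$, so it commutes with the recursive clauses of Definition \ref{BooleanValuesForV^B}. I would check this by $\in$-induction before running the main induction, after which the quantifier cases reduce to reindexing a join or meet.

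A secondary point is the set-theoretic legitimacy of the joins over the proper class $M^B_{fun}$. Both sides are joins of subsets of the set $B$, so they exist by completeness, and this causes no trouble.
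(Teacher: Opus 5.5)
Your proposal is correct and is essentially the paper's proof: induction on the complexity of $\phi$, with Lemma~\ref{Equalityofboolanvalues} for atomic formulas and Lemma~\ref{quadradinhokunenproperty} for connectives and quantifiers. Your two additions (reducing arbitrary names to functional ones via $r$ and Lemma~\ref{retractionpreservesforcing}, and matching the quantifier ranges $M^{B}_{fun}$ and $V^{B}$) are what the paper's one-line argument leaves implicit, since both cited lemmas are stated only for functional names; in the quantifier step, though, avoid relying on a literal bijection $V^{B}\cong M^{B}_{fun}$, because elements of $V^{B}$ may take the value $0$ and so are not all $(B\setminus\{0\})$-names, and argue instead that each $u\in V^{B}$ has $||u=u'||=1$ for some $u'\in M^{B}_{fun}$ (built recursively as with $r$), so that, since $||\cdot||$ respects Boolean equality, the join or meet over $V^{B}$ equals the one over $M^{B}_{fun}$.
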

\begin{proof}
	The proof is by induction on the complexity of $\phi$. Lemma \ref{Equalityofboolanvalues} deals with the atomic cases, and Lemma \ref{quadradinhokunenproperty} deals with all the others.
\end{proof}

Now we present our formalization of the statement at the beginning of this section, saying that $\forces$
and $\Forces$ would be viewed as \emph{equal}. Remember the definition of the canonical morphism $i: \Po \to \B \setminus \{0\}$ {(see the comments  after Lemma \ref{canonicaldensemorphism})}.

\begin{Theo}\label{HugoMicheltheorem1}
	Let $\Po$ be a forcing poset and denote by $i: \Po \to \B$ the canonical dense morphism. Let $r: M^{\B \setminus \{0\}} \to M^{\B}_{fun}$ be the retraction as in Lemma \ref{retractdef}. Let $\phi(\xdot_{1},\xdot_{2},\dots, \xdot_{n})$ be a formula of the forcing language, where $\xdot_{1},\xdot_{2},\dots,\xdot_{n}$ are $\Po-$names. For all $p \in \Po$,
\end{Theo} 

\begin{align*}	
	p \forces \phi(\xdot_{1},\xdot_{2},\dots,\xdot_{n}) \text{\emph{ iff }} p \Forces \phi(r(i_{*}(\xdot_{1})),r(i_{*}(\xdot_{2})),\dots,r(i_{*}(\xdot_{n}))).
\end{align*}

 \begin{proof}
 	By Theorem \ref{forcinganddensemorphism},
 	$$p \forces \phi(\xdot_{1},\xdot_{2},\dots,\xdot_{n}) \text{\emph{ iff }} i(p) \forces \phi(i_{*}(\xdot_{1}),i_{*}(\xdot_{2}),\dots,i_{*}(\xdot_{n})).$$
 	By Lemma \ref{retractionpreservesforcing},
 	$$i(p) \forces \phi(i_{*}(\xdot_{1}),i_{*}(\xdot_{2}),\dots,i_{*}(\xdot_{n}))\text{\emph{ iff }} i(p) \forces \phi(r(i_{*}(\xdot_{1})),r(i_{*}(\xdot_{2})),\dots,r(i_{*}(\xdot_{n}))).$$
 	By Lemma \ref{Booleanvalueforces},
 	$$i(p) \forces \phi(r(i_{*}(\xdot_{1})),r(i_{*}(\xdot_{2})),\dots,r(i_{*}(\xdot_{n})))\text{\emph{ iff }} i(p) \leq \lb\phi(r(i_{*}(\xdot_{1})),r(i_{*}(\xdot_{2})),\dots,r(i_{*}(\xdot_{n}))) \rb.$$
 	
 	By Corollary \ref{osquadradinhossaoiguais},
 	$$i(p) \leq \lb\phi(r(i_{*}(\xdot_{1})),r(i_{*}(\xdot_{2})),\dots,r(i_{*}(\xdot_{n}))) \rb\text{\emph{ iff }} i(p) \leq ||\phi(r(i_{*}(\xdot_{1})),r(i_{*}(\xdot_{2})),\dots,r(i_{*}(\xdot_{n})))||.$$
 	Finally, by {Definition \ref{forcingrelationbooleanalgebra}},
 	$$i(p) \leq ||\phi(r(i_{*}(\xdot_{1})),r(i_{*}(\xdot_{2})),\dots,r(i_{*}(\xdot_{n})))||\text{\emph{ iff }} p \Forces \phi(r(i_{*}(\xdot_{1})),r(i_{*}(\xdot_{2})),\dots,r(i_{*}(\xdot_{n}))).$$
 \end{proof}

\section{Forcing, Boolean-valued models and sheaves over Boolean algebras}\label{forcingmodelsandsheaves}

Let $\B$ be a complete Boolean algebra. It is well known (see, for instance, \cite{Bell05} Appendix) that from the Boolean-valued model $V^\B$ can be extracted a category  $Set^{(\B)}$, "by taking quotients". In more detail:
\begin{itemize}
\item An object of  $Set^{(\B)}$ is a class of equivalence $[x]$, where $x \in V^\B$ and $[x] = [x']$ iff $||x = x'|| = 1_{\B}$;
\item An arrow $[f] : [x] \to [y]$ in $Set^{(\B)}$ is a class of equivalences $[f]$, where $f \in V^\B$,  $[f] = [f']$ iff $||f = f'|| = 1_{\B}$, and $|| f$ is a function with domain $x$ and range contained in $y|| = 1_\B$. 
\end{itemize}

Moreover, in Appendix  of \cite{Bell05}, it is sketched an equivalence of categories between this category obtained by quotients on the Boolean-valued universe $V^\B$ and the category of sheaves of sets on $\B$ obtained from the natural notion of covering given by suprema: $$Set^{(\B)} \simeq Sh_{\bigvee(\B)}$$.


Taking into account the above described  scenario, it is possible to establish a connection between the sheaf theoretic version of forcing and Boolean-valued models by showing an equivalence of categories. In this section, we will discuss this result as well as a generalization of it.

We will assume basic knowledge in category theory, introducing the concepts of Grothendieck topology and Grothendieck topos. Given a category $\C$, $Obj(\C)$ represents the (class of) objects of $\C$. As a reference for this introduction to Topos Theory, see \cite{MacMoe94}.

\begin{Def}\label{Sievedef}
	Let $\C$ be a category and $C \in \text{Obj}(\C)$. A \textbf{sieve} on $C$ is a family $S$
	of arrows in $\C$ all with codomain $C$, such that if $f: A \to C$ belongs to $S$ and $g: B \to A$ is any arrow in $\C$ with codomain $A$, then $f \circ g \in S$.
	
\end{Def}

If $\C$ is a locally small category, a sieve on an object $C$ of $\C$ will be a subobject $S$ of $y(C) = Hom_{\C}( - ,C)$, i.e., there exists a monomorphism $S \rightarrowtail y(C)$ in $\hC$ satisfying the universal property of subobjects of $y(C)$. Moreover, if $S$ is a sieve on $C$ and $h: D \to C$ is any arrow in $\C$ with codomain $C$, then 
$$h^{*}(S) = \{g\text{ }|\text{  codom}(g) = D \text{ and } h \circ g \in S\}$$
is a sieve on $D$.

\begin{Def}
	Let $\C$ be a small category. A \textbf{Grothendieck topology} on $\C$ is a function $J$ which associates to each object $C$ of $\C$ a family $J(C)$ of sieves on $C$ satisfying the following properties.
	\begin{enumerate}
		\item $t_{C} = \{ f\text{ } | \text{  codom}(f) = C\} \in J(C)$. We call $t_{C}$ the \textbf{maximal sieve}.
		
		\item If $S \in J(C)$, then for any arrow $h: D \to C$ in $\C$, $h^{*}(S) \in J(D).$ This property is known as the \textbf{stability axiom}.
		
		\item If $S \in J(C)$ and $R$ is a sieve on $C$ such that for any $h: D \to C \in S$, $h^{*}(R) \in J(D)$, then $R \in J(C)$. Some books call this condition by the \textbf{transitivity axiom}. 
	\end{enumerate}
\end{Def}

\begin{Def}\label{Sitedef}
We call by \textbf{site} a pair $(\C,J)$, composed by a small category $\C$ and a Grothendieck topology $J$ on $\C$. If $S \in J(C)$, we say that $S$ \textbf{covers} $C$. 
\end{Def}

Using a Grothendieck topology we will construct the category where the $CH$ is not satisfied. The next example is the topology that in fact will provide the Cohen topos.

\begin{Example}
	Let $\Po$ be a poset. Note that $\Po$ is a category, where an arrow $p \to q$ in $~\Po$ means that $ p\leq q$. Given $p \in \Po$, consider the set $\overset{\leftarrow}{p} = \{q \in \Po\text{ } | \text{ }q \leq p\}$. A subset $D \subseteq \overset{\leftarrow}{p}$ is \textbf{dense below $p$} if for every $r \leq p$, there exists $q \in D$ such that $q \leq r$. The dense sieves form a Grothendieck topology $J$ on $\Po$ by
	
	$$J(p) = \{D : D \text{ is a sieve on } p \text{ and dense below p}\}. $$
	We call this Grothendieck topology by \textbf{dense topology} or \textbf{double-negation topology}, denoted by $\neg\neg$-topology or just $\neg\neg$.  
\end{Example}
 A generalization of Boolean algebras is Heyting algebras, where we remove the \emph{excluded middle}, that is, $-(-u) = u$ is not true in  a Heyting algebra.
\begin{Example}
	Let $H$ be a complete Heyting algebra. We can see $H$  as a category in the same way as posets (an arrow $h \to k$ in $H$ means that $h \leq k$, for $h,k \in H$). The \textbf{sup topology} on $H$ is a Grothendieck topology $J$ such that for all $h \in H$,
	
	$$J(h) = \{S: \bigvee  S = h\}.$$
We usually denote the sup topology by $\bigvee$.
\end{Example}

\begin{Def}
	Let $\C$ be a category with pullbacks. A \textbf{basis} for a Grothendieck topology on $\C$ is a function $B$ which associates an object $C \in \text{Obj}(\C)$ to a collection $B(C)$ of families of arrows in $\C$ with codomain $C$ satisfying the next conditions:
	
	\begin{enumerate}
		\item If $f: C' \to C$ is an iso, then $~\{f:C' \to C\} \in B(C)$.
		
		\item If $~\{f_{i}: C_{i} \to C ~ |\text{ for all }i \in I\} \in B(C)$ and $g: D
		 \to C$ is any morphism of $\C$ with codomain $C$, then the family of projections $\{ \pi_{2}: C_{i} \times D \to D~|\text{ for all } i \in I\}$ belongs to $B(D)$.
		 
		\item If $~\{f_{i}: C_{i} \to C ~ |\text{ for all }i \in I\} \in B(C)$ and for each $i \in I$, there exists a family $\{g_{ij}: D_{ij} \to C_{i}~|~ j \in J_{i}\} \in B(C_{i})$, then $\{f_{i} \circ g_{ij}: D_{ij} \to C ~|\text{ for all }i \in I, j \in J_{i}\} \in B(C)$.
	\end{enumerate}
\end{Def}
\begin{Def}
	Let $\C$ be a category and $A \in \text{Obj}(\C)$. Given a collection of morphism $B = \{g_{i}:A_{i} \to A\}_{ i \in I }$ in $\C$, the sieve (see Definition \ref{Sievedef}) \textbf{generated} by $B$, is the sieve $$<B> = \{f \circ g_{j}: f: C \to A_{j} \text{ and } g_{j} \in B\}.$$ 
\end{Def}

Given a basis $B$ on $\C$, we can obtain a Grothendieck topology $J$ defining for each $C \in \text{Obj}(C)$, for every sieve $S$ on $C$,
$$S \in J(C) \text{ if and only if there exists } R \in B(C)\text{ such that }<R> \subseteq S. $$

The objects of the category that we are looking for (Grothendieck topos) are sheaves. To talk about them, we need more definitions, starting with the notion of presheaves. 

\begin{Def}\label{presheafdef}
	Let $\C$ be a category. A \textbf{presheaf} $F$ on $\C$ is a functor
	$$F: \C^{op} \to \Sets.$$
	
\end{Def}

\begin{Example}\label{Representablepresheaf}
One example of a family of presheaves are the functors of the form $y(C)=Hom_{\C}( - ,C)$ for a given object $C$ of $\C$. We call them by \textbf{representable presheaves}.
\end{Example}

\begin{Def}\label{categoryofpresheavesdef}
	
	Let $\C$ be a category. The \textbf{category of presheaves on} $\C$ is the category ${\Sets}^{\C^{op}}$ of functors $F: \C^{op} \to \Sets$ from the opposite category of $\C$ to $\Sets$ and natural transformations between them. In this case, we represent this category of presheaves over $\C$ using the notation $\Hat{\C}$. 
	
\end{Def}

Now, fix a site $(\C,J)$ and a presheaf $P$ on $\C$. Given an arrow $g: E \to D$ in $\C$, we have that $P(g) : P(D) \to P(E)$. For all $x \in P(D)$, $x \cdot g$ stands for $P(g)(x)$.  If $S$ is sieve and covers an object $C$ of $\C$, a \textbf{matching family} for $S$ of elements of $P$ is a function which associates each element $f: D \to C$ of $S$, to an element $x_{f} \in P(D)$ satisfying $$P(g)(x_{f}) = x_{f} \cdot g = x_{f\circ g},$$ for all morphisms $g: E \to D$ of $\C$. An \textbf{amalgamation} of such a matching family is a single element $x \in P(C)$ such that $$x \cdot f = x_{f} \text{ for all } f \in S.$$

\begin{Def}\label{Sheafdef}
	Let $(\C,J)$ be a site and $P$ a presheaf over $\C$. Then $P$ is a \textbf{sheaf for} $J$ if for every matching family  of elements of $P$ for any cover of any object of $\C$ there exists a unique amalgamation. In this case, we also say that $P$ is a sheaf on the site $(\C,J$).
\end{Def}

Then, sheaves on a site $(\C,J)$ form a category $Sh(\C,J)$, where the objects are the sheaves and the arrows, natural transformations between them. In this case, $Sh(\C, J)$ is a full subcategory of $\Sets^{\C^{op}}$, then we have the inclusion functor

\begin{center}
	\begin{tikzcd}
		{Sh(\C,J)} \arrow[r, "Id_{Sh(\C,J)}", hook] & \Sets^{\C^{op}}.
	\end{tikzcd}
\end{center}

\begin{Def}
	A \textbf{Grothendieck topos} is a category which is equivalent to the category $Sh(\C,J)$ of sheaves on some site $(\C,J)$.
\end{Def}

\begin{Ex}
	The following categories are Grothendieck toposes and we will use them later:
	\begin{enumerate}
		\item The category of sets, $\Sets$.
		\item The category of presheaves $~\widehat{\C} = \Sets^{\C^{op}}$, where $\C$ is a small category.
	\end{enumerate}
\end{Ex}

\begin{Def}\label{separativeposet}
    A poset $\Po$ is called separative if for all $p, q \in \Po$, if $p \nleq q$, then there exists $r \leq p$ such that for all $s \leq r$, holds $s \nleq q$.
\end{Def}

 Many relevant posets that appear in  forcing are separative. In particular the so called Cohen forcing is separative and, for any complete Boolean algebra $\B$, the poset $\Po = \B \setminus \{0\}$ is separative (just take $r = p . (- q)$).

The (well known) result concerning separative forcing is the following:

\begin{Theo}\label{equivalenceofcategoriesdensemorphismseparative}
	Let $\Po$ be a separative poset. Then there exists a complete Boolean algebra $\mathbb{B}$ such that 
	$$Sh_{\neg \neg}(\Po) \simeq Sh_{\bigvee}(\mathbb{B}).$$
\end{Theo}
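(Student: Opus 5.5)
We take $\B$ to be the complete Boolean algebra of regular open subsets of $\Po$, with the canonical dense morphism $i:\Po\to\B\setminus\{0\}$ of Lemma \ref{canonicaldensemorphism}. The proof factors as
$$Sh_{\neg\neg}(\Po)\simeq Sh_{\neg\neg}(\B\setminus\{0\})\simeq Sh_{\bigvee}(\B).$$
Both steps are instances of the Comparison Lemma: if $\D\hookrightarrow\C$ is a full subcategory and every object of $\C$ is $J$-covered by morphisms from objects of $\D$, then restriction gives $Sh(\C,J)\simeq Sh(\D,J|_{\D})$.

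\textbf{Step 1: $\Po$ embeds fully into $\B\setminus\{0\}$.} For a comparison to apply, $i$ must be an order-embedding, not merely a dense morphism. Separativity is exactly what gives this.
\begin{itemize}
\item Injectivity: use the separative quotient convention, or note that the poset is antisymmetric.
\item Reflection of order: suppose $p\nleq q$. Separativity gives $r\le p$ with every $s\le r$ satisfying $s\nleq q$. From this, $r\perp q$ in the sense of the regular-open algebra, i.e.\ $\overset{\leftarrow}{r}\cap int(cl(\overset{\leftarrow}{q}))=\emptyset$. Hence $i(r)\le i(p)$ while $i(r)\cdot i(q)=0$, so $i(p)\nleq i(q)$.
\end{itemize}
Identifying $\Po$ with $i(\Po)$, it is a dense full subposet of $\B\setminus\{0\}$.

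\textbf{Step 2: the dense topology restricts correctly.} We check that the dense topology on $\B\setminus\{0\}$ restricts to the dense topology on $\Po$. A sieve $S$ on $p$ in $\Po$ is dense below $p$ in $\Po$ iff the sieve it generates in $\B\setminus\{0\}$ is dense below $i(p)$. This follows from density of $i(\Po)$ together with preservation and reflection of incompatibility (Definition \ref{densemorphism}). Moreover, every $b\in\B\setminus\{0\}$ is covered by $\{i(p): i(p)\le b\}$, which is dense below $b$ by density of $i(\Po)$. The Comparison Lemma then yields the first equivalence.

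\textbf{Step 3: $\neg\neg$ versus $\bigvee$ on a Boolean algebra.} On $\B\setminus\{0\}$ we show that a sieve $S$ on $b$ is dense below $b$ iff $\bigvee S=b$.
\begin{itemize}
\item If $S$ is dense below $b$ but $\bigvee S<b$, then $b\cdot-\bigvee S$ is a nonzero element below $b$ with no element of $S$ beneath it, a contradiction.
\item Conversely, if $\bigvee S=b$ and $0\ne c\le b$, then $c=\bigvee_{s\in S}c\cdot s$. Some $c\cdot s\neq0$, and since $S$ is a sieve, $c\cdot s\in S$ lies below $c$.
\end{itemize}
The bottom element $0$ needs separate handling. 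In $Sh_{\bigvee}(\B)$ the empty family covers $0$, so every sheaf $F$ has $F(0)$ a singleton. Thus $0$ is covered by the empty sieve, and removing it is again a Comparison Lemma instance with $\D=\B\setminus\{0\}$. This gives $Sh_{\bigvee}(\B)\simeq Sh_{\neg\neg}(\B\setminus\{0\})$.

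The main obstacle is Step 1, in particular the care needed with the pre-order versus partial-order issue. If $\Po$ is only a pre-order, $i$ fails to be injective, and we would instead pass through the equivalent skeleton (the posetal reflection). This does not change the sheaf topos, because equivalent sites give equivalent sheaf categories. Checking that the induced topology is exactly the dense one is routine.
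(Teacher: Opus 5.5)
Your proof is correct. The paper gives no argument of its own for this statement, only a citation to Mac Lane--Moerdijk, and what you wrote is the standard Comparison-Lemma proof of that fact. Structurally it is the same two-step factorization $Sh_{\neg\neg}(\Po)\simeq Sh_{\neg\neg}(\B\setminus\{0\})\simeq Sh_{\bigvee}(\B)$ that the paper later uses for Theorem \ref{HugoMicheltheorem2}, via Propositions \ref{sheavesposetalgebrasem0} and \ref{sheavesnegationandsup}.

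The one real difference is in the first step. You use separativity to make $i$ order-reflecting, hence full, so the classical Comparison Lemma (Corollary \ref{comparisonlemma}) applies directly. The paper's general route instead invokes Beilinson's Lemma \ref{BeilinsonLemma}, which tolerates a non-full $i$; that is exactly why Theorem \ref{HugoMicheltheorem2} can drop the separativity hypothesis. Your version is more elementary and shows precisely where separativity enters, while the paper's is more general.

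Two small remarks. First, injectivity of $i$ does not matter. Corollary \ref{comparisonlemma} only asks for a fully faithful functor, and for preorders order-reflection already is full faithfulness, so the detour through the posetal reflection is unnecessary. Second, you handle the bottom element explicitly via the empty cover of $0$, and in Step 3 you use the distributivity argument $c=\bigvee_{s\in S}c\cdot s$. Both make explicit points that the paper's proof of Proposition \ref{sheavesnegationandsup} leaves implicit.
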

\begin{proof}
See \cite{MacMoe94},  Corollary 3 of the Section 4 of the Appendix: Sites for Topoi, and the following comments, pages 590-591.
\end{proof}

One would interpret this theorem as the separative forcing, as a method, has the \emph{same} content as Boolean-valued models. 

Now we will give a generalization of Theorem \ref{equivalenceofcategoriesdensemorphismseparative} for any forcing poset as part of the original contributions of this work (Theorem \ref{HugoMicheltheorem2}).

\begin{Def}
	Let $\C$ be a small category . Let $J$ and $K$ be two Grothendieck topologies on $\C$. We say that $J$ is \textbf{finer} than $K$ if for all $C \in \text{Obj}(\C)$, $J(C)\subseteq K(C)$.
\end{Def}

\begin{Def}
	Let $\C$ and $\D$ be a small categories and $(\D,J)$ a site. Given a functor $F: \C \to \D$, the \textbf{topology induced} by $F$ on $\C$, denoted by $J_{F}$, is the finest one such that for all $H$ sheaf on $\D$ for $J$, $H \circ F$ is a sheaf on $\C$ for $J_{F}$.
\end{Def}

\begin{Lemma}\label{BeilinsonLemma}
	Let $\C$ and $\D$ be  small categories and $J$ a Grothendieck topology on $\D$. Suppose that there exists a faithful functor $F: \C \to \D$ satisfying the following:
	\begin{enumerate}

		\item[] Fix an object $D \in Obj(\D)$. Given a finite set of objects $\{C_{i} \in Obj(\C): i \in I\}$ and a family of morphism in $\D$ of the form $(D \to F(C_{i}))_{i \in I}$, there exists a family of morphisms  in $\D$ of the form $(F(E_{j}) \to D)_{j \in J}$ such that the composition $F(E_{j}) \to D \to F(C_{i})$ lies in the image of $Hom(E_{j},C_{i}) \to Hom_{\D}(F(E_{j}),F(C_{i}))$, for all $i \in I, j \in J$, and the sieve generated by $(F(E_{j}) \to D)_{j \in J}$ covers $D$.
	\end{enumerate}
	Then:
	\begin{enumerate}
		\item The induced topology $J_{F}$ on $\C$ has the following property: For all $C \in Obj(\C)$,
		$$S \in J_{F}(C) \text{ iff } <F(S)> \in J(F(C)),$$
		where $<F(S)>$ is the sieve generated by $ \{F(f): f \in S\}$.
		
		\item $Sh_{J_{F}}(\D) \simeq Sh_{J}(\C)$, i.e., the functor
		\begin{align*}
				F^{*}:\widehat{\D} &\to \widehat{\C} \\
							H &\mapsto H \circ F
		\end{align*}
		restricts to the equivalence of categories above.
	\end{enumerate}
	
\end{Lemma}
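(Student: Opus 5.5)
This is a version of the Comparison Lemma, and the plan is to prove it with the standard argument of \cite{MacMoe94} (Appendix, Section 4), adapted to a faithful functor $F$ that is not necessarily full. The stated equivalence is between $Sh_J(\D)$ and $Sh_{J_F}(\C)$, with $F^*$ going from sheaves on $\D$ to sheaves on $\C$. The proof has two parts: first identify $J_F$ concretely, then show that $F^*$ restricted to sheaves is fully faithful and essentially surjective.

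\textbf{Step 1: describing $J_F$.} Define $K(C)=\{S : \langle F(S)\rangle \in J(F(C))\}$.
\begin{enumerate}
\item I first check that $K$ is a Grothendieck topology. The maximal sieve and transitivity are straightforward. Stability under $h: C'\to C$ uses the covering hypothesis: given the finite data $F(C')\to F(C)$, it produces covering families of $F(C')$ whose composites factor through images of $\C$-arrows. These factorizations let me compare $F(h^*S)$ with $F(h)^*\langle F(S)\rangle$ up to refinement by $J$-covers.
\item Next I show that $H\circ F$ is a $K$-sheaf for every $J$-sheaf $H$. A matching family for $S\in K(C)$ is transported to a matching family on $\langle F(S)\rangle$. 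Faithfulness of $F$ is used here, so that the values $x_{F(f)\circ g}$ are well defined. The $J$-amalgamation in $H(F(C))$ then gives the required amalgamation.
\item Finally, for any topology $K'$ with the same sheaf property, I show $K\subseteq K'$. The route is to sheafify $y(C)$ restricted along $F$, or equivalently to use the associated sheaf of the image of $S$ in $\widehat{\D}$. This identifies $K$ with $J_F$ and gives item 1.
\end{enumerate}

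\textbf{Step 2: full faithfulness.} Take $J$-sheaves $H,H'$ and a natural transformation $\alpha: HF\to H'F$. I extend $\alpha$ to $\D$ as follows.
\begin{enumerate}
\item For an object $D$ and $x\in H(D)$, the hypothesis applied to the empty family $I=\emptyset$ gives a $J$-cover of $D$ by arrows $u_j: F(E_j)\to D$.
\item I take the amalgamation in $H'(D)$ of the elements $\alpha_{E_j}(x\cdot u_j)$.
\item These elements form a matching family. To see this, for two indices the hypothesis is applied to the pair of arrows into $F(E_j)$ and $F(E_k)$. This refines any span to arrows lying in the image of $F$, where naturality of $\alpha$ applies.
\end{enumerate}
Uniqueness of amalgamations gives both well-definedness and the uniqueness of the extension.

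\textbf{Step 3: essential surjectivity.} This is the main obstacle. Given a $J_F$-sheaf $P$ on $\C$, I define $\tilde P(D)$ as the set of compatible families $(x_u)$, where $u$ ranges over all arrows $F(E)\to D$ and $x_u\in P(E)$. Compatibility is required against every $\C$-arrow $e$ with $u\circ F(e)=u'$, and more generally against commuting squares in $\D$ involving images of $F$. The plan is then:
\begin{enumerate}
\item Prove that $\tilde P$ is a $J$-sheaf. Gluing is inherited from the sheaf condition of $P$ along induced covers.
\item Prove that $\tilde P\circ F\cong P$.
\end{enumerate}
The delicate point is the second isomorphism. An arrow $F(E)\to F(C)$ need not come from $\C$, since $F$ is not full, so the map $P(C)\to\tilde P(F(C))$ is not obviously surjective. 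I would try the following: cover such an arrow, using the hypothesis, by arrows $F(E_j)\to F(E)$ whose composites lie in the image of $\C$. Then use item 1 to show that these $F(E_j)\to F(E)$ generate a $J_F$-covering sieve. The sheaf property of $P$ then pins down every $x_u$ from the values $x_{F(f)}$.
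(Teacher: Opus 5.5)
The paper gives no argument of its own here; it only cites Beilinson. Your reconstruction is therefore the substance, and its architecture is the right one: identify $J_F$, show $F^*$ lands in $J_F$-sheaves, then show it is fully faithful and essentially surjective via the right Kan extension. You were also right to read item 2 as $Sh_J(\D)\simeq Sh_{J_F}(\C)$. Several steps, however, would not go through as written.

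First, in Step 1.3 the inclusion is backwards. The trivial topology (only maximal sieves) makes every presheaf a sheaf, so it has the sheaf property. Yet $K\subseteq K'$ fails for it as soon as $K$ has a non-maximal cover. What you must show is $K'\subseteq K$: if $S\in K'(C)$ then $\langle F(S)\rangle\in J(F(C))$. The paper's definition of ``finer'' points the wrong way, which may be what misled you; item 1 only holds if $J_F$ is the topology with the \emph{most} covering sieves. Your sheafification idea proves exactly this reversed inclusion. Concretely, take the $J$-sheaf $\Omega_J$ of $J$-closed sieves. Both the $J$-closure of $\langle F(S)\rangle$ and the maximal sieve on $F(C)$ restrict to maximal sieves along every $F(f)$, $f\in S$. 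Uniqueness of amalgamations in the $K'$-sheaf $\Omega_J\circ F$ then forces the closure to be maximal.

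Second, faithfulness alone does not make the transported family in Step 1.2 well defined, nor does it give stability in Step 1.1. If $F(f)\circ k=F(f')\circ k'$ with $k,k'$ outside the image of $F$, faithfulness says nothing. You need the refinement you already use in Step 2:
\begin{enumerate}
\item For Step 1.2, apply the hypothesis to the pair $\{k,k'\}$. This gives a $J$-cover $u_j:F(E_j)\to D$ with $k u_j=F(a_j)$ and $k' u_j=F(a'_j)$. Faithfulness gives $f a_j=f'a'_j$, hence $(x_f\cdot k)\cdot u_j=(x_{f'}\cdot k')\cdot u_j$, and separatedness of $H$ finishes.
\item For stability in Step 1.1, do not apply the hypothesis to $F(h)$, which is already in the image. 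Apply it instead to $\{g,k\}$ for each $g:D\to F(C')$ with $F(h)g=F(f)k$, $f\in S$. This gives $g u_j=F(a_j)$ and $k u_j=F(b_j)$. Faithfulness yields $h a_j=f b_j\in S$, and local character of $J$ concludes.
\end{enumerate}

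Third, the same issue appears at the point you flag as the main obstacle in Step 3. Applying the hypothesis to $u:F(E)\to F(C)$ alone yields arrows $t_j:F(E_j)\to F(E)$ that need not be of the form $F(e_j)$. They then generate no sieve on $E$ in $\C$, and item 1 cannot be invoked. Apply the hypothesis instead to $\{u,\mathrm{id}_{F(E)}\}$. Then $t_j=F(e_j)$ and $u F(e_j)=F(c_j)$, so $T_u=\{e: u\circ F(e)\in F(\mathrm{Hom}_{\C}(-,C))\}$ lies in $J_F(E)$ by item 1. The sheaf property of $P$ on $T_u$ gives both existence and uniqueness of $x_u$, because the $c$ with $F(c)=uF(e)$ is unique by faithfulness. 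Hence $(x_u)\mapsto x_{\mathrm{id}}$ is a bijection.

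Finally, your extra compatibility ``against commuting squares'' is unnecessary. The plain right Kan extension $\tilde P(D)=\mathrm{Hom}_{\widehat{\C}}(\mathrm{Hom}_{\D}(F(-),D),P)$ suffices. For a $J$-cover $R$ of $D$ and $u:F(E)\to D$, the sieve $\{e: F(e)\in u^*R\}$ is $J_F$-covering, by the one-arrow case of the hypothesis plus local character. This is exactly what makes $\tilde P$ a $J$-sheaf.
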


\begin{proof}
	See \cite{Bei12} first Proposition of Section 2.1.
\end{proof}

A corollary of Lemma \ref{BeilinsonLemma} is the well-known Comparison lemma:

\begin{Corolary}\label{comparisonlemma}
	Let $\C$ be a small category and $(\D,J)$ a site. Let $F: \C \to D$ be a full and faithful functor. Let $J_{F}$ be the induced topology on $\C$ by $F$. If for all $d \in \text{Obj}(\D)$ there exists a sieve generated by morphisms of the form $(F(C_{i}) \to d)_{i \in I})$, for $C_{i} \in \text{Obj}(\C)$, then
	$$Sh_{J_{F}}(\C) \simeq Sh_{J}(\D).$$
\end{Corolary}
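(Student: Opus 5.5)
We will derive Corollary \ref{comparisonlemma} from Lemma \ref{BeilinsonLemma}. The functor $F:\C\to\D$ is already assumed faithful, so it only remains to check the covering hypothesis of Lemma \ref{BeilinsonLemma}. Once that is done, part 2 of the lemma yields the equivalence $Sh_{J_F}(\C)\simeq Sh_J(\D)$ directly, since in the corollary $J$ lives on $\D$ and $J_F$ is the induced topology on $\C$.

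\textbf{Verifying the hypothesis.} Fix $D\in Obj(\D)$, a finite family of objects $C_i$ of $\C$ ($i\in I$), and arrows $g_i: D\to F(C_i)$. By the density hypothesis of the corollary there is a family $(h_j:F(E_j)\to D)_{j\in J}$, with $E_j\in Obj(\C)$, whose generated sieve covers $D$; we read the hypothesis ``there exists a sieve generated by morphisms of the form $F(C_i)\to d$'' as saying that this generated sieve belongs to $J(D)$. We take this family as the required one. For each pair $i,j$ the composite $g_i\circ h_j: F(E_j)\to F(C_i)$ is an arrow of $\D$ between objects in the image of $F$. Since $F$ is full, the map $Hom_\C(E_j,C_i)\to Hom_\D(F(E_j),F(C_i))$ is surjective, so the composite lies in its image. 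This verifies the hypothesis, and note that the choice of the family does not even depend on the $g_i$ or on the finiteness of $I$.

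\textbf{Concluding.} Applying Lemma \ref{BeilinsonLemma}, part 1 identifies $J_F$ concretely: $S\in J_F(C)$ iff $\langle F(S)\rangle\in J(F(C))$. Part 2 says that restriction along $F$, i.e.\ $H\mapsto H\circ F$, gives the equivalence between sheaves on $(\D,J)$ and sheaves on $(\C,J_F)$. The statement of Lemma \ref{BeilinsonLemma} writes the two sheaf categories with the subscripts interchanged ($Sh_{J_F}(\D)\simeq Sh_J(\C)$), but since $F^*:\widehat{\D}\to\widehat{\C}$ sends $J$-sheaves to $J_F$-sheaves by the very definition of $J_F$, the intended meaning is $Sh_J(\D)\simeq Sh_{J_F}(\C)$. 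That is exactly the claim.

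\textbf{Main obstacle.} The only delicate point is interpretive rather than technical. We must make sure the corollary's density hypothesis is read as ``every object is \emph{covered} by objects coming from $\C$'', not merely that some sieve of this form exists (one always exists, e.g.\ the empty one). Relatedly, we must read the lemma's conclusion with the topologies in their correct places, as discussed above. With those readings fixed, the argument is the short verification given here, in which fullness does all the work.
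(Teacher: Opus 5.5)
Your proposal is correct and follows the paper's route. The paper's proof just cites Lemma~\ref{BeilinsonLemma}, and you supply the omitted verification: fullness makes the image condition automatic, and the density hypothesis, read as a $J$-covering condition, provides the required covering family. Your remarks on that reading of the hypothesis and on the swapped subscripts in the lemma's conclusion are both accurate.
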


\begin{proof}
	See Lemma \ref{BeilinsonLemma}.
\end{proof}

\begin{Prop}
    \label{sheavesposetalgebrasem0}
	Let $\Po$ be a forcing poset and $\B$ its complete Boolean algebra of regular open sets of $~\Po$. Then
	$$Sh_{\neg \neg}(\Po) \simeq Sh_{\neg \neg}(\B \setminus \{ \emptyset\}).$$
\end{Prop}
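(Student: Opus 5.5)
We apply Lemma \ref{BeilinsonLemma} to the canonical dense morphism $i: \Po \to \B \setminus \{\emptyset\}$ of Lemma \ref{canonicaldensemorphism}, viewed as a functor between posets-as-categories. Take $\C = \Po$, $\D = \B \setminus \{\emptyset\}$ and $J = \neg\neg$ on $\D$. Then show two things: the induced topology $J_{i}$ on $\Po$ is exactly the dense topology $\neg\neg$ on $\Po$, and the covering hypothesis of Lemma \ref{BeilinsonLemma} holds. The conclusion is then $Sh_{\neg\neg}(\Po) \simeq Sh_{\neg\neg}(\B\setminus\{\emptyset\})$. The Comparison Lemma (Corollary \ref{comparisonlemma}) cannot be used directly, because $i$ need not be full when $\Po$ is not separative: $i(p) \leq i(q)$ does not imply $p \leq q$. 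This is why we use the more general Lemma \ref{BeilinsonLemma}.

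Faithfulness is automatic, since hom-sets in a preorder have at most one element; monotonicity of $i$ (item 2 of Definition \ref{densemorphism}) makes $i$ a functor. For the covering hypothesis, fix $b \in \B\setminus\{\emptyset\}$, finitely many $p_1,\dots,p_n \in \Po$ with $b \leq i(p_k)$ for all $k$, and let $E = \{e \in \Po : e \leq p_k \text{ for all } k \text{ and } i(e) \leq b\}$. The family $(i(e) \leq b)_{e\in E}$ has compositions $i(e) \leq b \leq i(p_k)$ that come from the genuine arrows $e \leq p_k$ in $\Po$, as required. It remains to show that the sieve generated by $\{i(e): e \in E\}$ is dense below $b$. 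Given $c \leq b$, density of $i(\Po)$ gives $q$ with $i(q) \leq c$. This $q$ need not lie below the $p_k$, so we refine it. Since $i(q) \leq i(p_1)$, the preservation of incompatibility (item 3 of Definition \ref{densemorphism}) together with regular-open calculus shows that $q$ is compatible with $p_1$. Take $q_1 \leq q, p_1$; then $i(q_1) \leq i(q) \leq i(p_2)$, and we iterate through $p_2, \dots, p_n$ to obtain $e \leq q, p_1, \dots, p_n$ with $i(e) \leq c \leq b$. Thus $e \in E$ and the sieve is dense below $b$. The key fact used here is that for $q \leq q'$ (or in general), $i(q) \leq i(p)$ implies $q \not\perp p$; this follows from $i(q) \neq \emptyset$ and item 3.

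Next, identify $J_i$ using part 1 of Lemma \ref{BeilinsonLemma}: a sieve $S$ on $p$ lies in $J_i(p)$ iff $\langle i(S)\rangle$ is dense below $i(p)$ in $\B\setminus\{\emptyset\}$.

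If $S$ is dense below $p$, take $c \leq i(p)$. Pick $q$ with $i(q) \leq c$; by the argument above $q$ is compatible with $p$, so choose $r \leq q,p$. Density of $S$ gives $s \in S$ with $s \leq r$, and then $i(s) \leq c$.

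Conversely, suppose $\langle i(S)\rangle$ is dense below $i(p)$ and let $r \leq p$. Then $i(r) \leq i(p)$, so there is $s \in S$ and $c$ with $c \leq i(r)$ and $c \leq i(s)$. Hence $i(r)$ and $i(s)$ are compatible, so $r$ and $s$ are compatible by item 3. Take $t \leq r, s$. Since $S$ is a sieve and $t \leq s$, we get $t \in S$, and $t \leq r$. So $S$ is dense below $p$, and $J_i = \neg\neg$.

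The main obstacle is the translation between the two notions of density, in the first direction above and in the covering hypothesis. Both rely on item 3 to pull compatibility back along $i$ without having fullness. The delicate point is justifying that $i(q) \leq i(p)$ forces $q$ and $p$ to be compatible. I would prove it directly from the construction of $i$: if $q \perp p$, then $\lp \cap int(cl(\overset{\leftarrow}{q})) = \emptyset$ by the argument in the proof of Lemma \ref{canonicaldensemorphism}, which contradicts $q \in i(q) \subseteq i(p)$ after unwinding the closure.
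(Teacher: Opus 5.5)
Your proof is correct and follows the same route as the paper: apply Lemma \ref{BeilinsonLemma} to the canonical dense morphism $i:\Po\to\B\setminus\{\emptyset\}$, verify the covering hypothesis, and identify $J_i$ with $\neg\neg$ using density of $i(\Po)$ and item 3 of Definition \ref{densemorphism}; your two directions for $J_i=\neg\neg$ are essentially the paper's. Your check of the covering hypothesis is more careful than the paper's, and this matters: you take all $e$ below every $p_k$ with $i(e)\le b$ and refine iteratively, so every composite $i(e)\le b\le i(p_k)$ lifts and the sieve really is dense below $b$, whereas the paper's finite family $i(r_j)$ only lifts the composites with matching index, and its density step $i(r_q)\le q\cdot i(r_{j'})$ silently assumes $q\cdot i(r_{j'})\neq 0$.
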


\begin{proof}
	Let $i: \Po \to \B$ be the canonical dense morphism. To use Lemma \ref{BeilinsonLemma}, we need to check that the dense morphism $i: \Po \to \B \setminus \{\emptyset\}$ satisfies the required property. So fix $b \in \B \setminus \{\emptyset\}$. Let $$(b \leq  i(a_{j}))_{j \in J}$$ be a finite family of morphism in $\B \setminus \{\emptyset\}$. By density, there exists $a_{b} \in \Po$ such that $i(a_{b})  \leq b$. From $i(a_{b}) \leq b \leq i(a_{j})$, we conclude that $i(a_{b}) \not \perp i(a_{j})$, for all $j \in J$.Then, for each $j \in J$, there exists $r_{j} \in \Po$ such that $r_{j} \leq a_{b}, a_{j}$. Let $S$ be the sieve generated by $(i(r_{j}) \leq b)_{j \in J}$. We need to show that $S \in \neg \neg (b)$. Let $q \leq b$. Fix $j' \in J$. By density, there exists $r_{q} \in \Po$ such that $i(r_{q}) \leq q \cdot i(r_{j'})$. Then $i(r_{q}) \leq q$ and $i(r_{q}) \in S$, because $i(r_{q}) \leq i(r_{j'})$. Note that for all $j \in J$, the composition $i(r_{j}) \leq b \leq i(a_{j})$ lies on the image of 
	$$Hom_{\Po}(r_{j},a_{j}) \to Hom_{\B \setminus \{\emptyset\}}.(i(r_{j}),i(a_{j})).$$
	
	Consider $\neg \neg$ the double-negation topology on $\B \setminus \{\emptyset\}$. By Lemma \ref{BeilinsonLemma}, the induced topology $J_{i}$ provides an equivalence of categories of sheaves
	
	$$Sh_{J_{i}}(\Po) \simeq Sh_{\neg \neg}(\B \setminus \{ \emptyset \}).$$
	
	We now show that $J_{e}$ is the double negation topology $\neg \neg$. Fix a $p \in \Po$ and let $$S = (p_{j} \leq p)_{j \in J} \in \neg \neg(p).$$ We will show that  $<i(S)> \in \neg \neg (i(p))$. Let $b \in \B \setminus \{\emptyset\}$ such that $b \leq i(p)$. By density, there exists $a \in \Po$ such that $i(a) \leq b$. In particular, $i(a)\not \perp i(p)$, so there exists $c \in \Po$ such that $c \leq a,p	$. But $S$ is a sieve, then $ (c \leq p) \in S$. We conclude that $(i(c)\leq i(p)) \in <i(S)>$ and $i(c) \leq b.$
	
	Conversely, suppose that $<i(S)> \in \neg \neg (i(p))$. Fix a $q \leq p$. In particular, $i(q) \leq i(p)$. Then, there exists $(s \leq i(p)) \in <i(S)>$ so that $s \leq i(q)$. Moreover, there exists $u \in S$ such that $i(u) \leq s$. From $i(u) \not \perp i(q)$, we conclude that there is $v \in \Po$ such that $v \leq u,q$. Then $(v\leq i(p)) \in S$ and $v\leq q$.
\end{proof}

\begin{Prop}\label{sheavesnegationandsup}
	Let $\B$ be a complete Boolean algebra. Then
	$$Sh_{\neg \neg}(\B \setminus{\{0\}}) \simeq Sh_{\bigvee}(\B),$$
	where $\bigvee$ is the sup (Grothendieck) topology on $\B$.
\end{Prop}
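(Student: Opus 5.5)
We apply the Comparison Lemma (Corollary \ref{comparisonlemma}), or more generally Lemma \ref{BeilinsonLemma}, to the inclusion functor
$$F: \B \setminus \{0\} \hookrightarrow \B,$$
where $\B$ carries the sup topology $\bigvee$. Since $\B\setminus\{0\}$ is a full subposet of $\B$, the functor $F$ is full and faithful. The proof then has two parts. First we check the covering hypothesis of the Comparison Lemma. Then we identify the induced topology $J_F$ on $\B\setminus\{0\}$ with the double-negation topology. Together these give $Sh_{\neg\neg}(\B\setminus\{0\}) = Sh_{J_F}(\B\setminus\{0\}) \simeq Sh_{\bigvee}(\B)$.

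\textbf{Covering hypothesis.} For $b \in \B$ with $b \neq 0$, the sieve generated by $\{c \leq b : c \in \B\setminus\{0\}\}$ is all of $\downarrow b$, and its supremum is $b$. So it belongs to $\bigvee(b)$.

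The case $b=0$ is the only delicate point. The empty family has $\bigvee \emptyset = 0$, so the empty sieve covers $0$ in the sup topology. It is generated by the empty family of morphisms of the form $F(C_i) \to 0$, so the hypothesis of Corollary \ref{comparisonlemma} still holds, provided that hypothesis is read as asking for a \emph{covering} sieve generated by objects in the image of $F$. This is how Lemma \ref{BeilinsonLemma} phrases it. As a sanity check, any $\bigvee$-sheaf $H$ has $H(0)$ a singleton, since the empty cover has exactly one matching family. Hence removing $0$ loses no information.

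\textbf{Identifying the induced topology.} By Lemma \ref{BeilinsonLemma}(1), a sieve $S$ on $p \in \B\setminus\{0\}$ lies in $J_F(p)$ iff $\langle F(S)\rangle \in \bigvee(p)$. Since $\langle F(S)\rangle$ is just $S$ together with possibly $0$, this means exactly that $\bigvee S = p$. So it remains to show that for a sieve $S$ on $p$ in $\B\setminus\{0\}$,
$$S \text{ is dense below } p \iff \bigvee S = p.$$

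For ($\Rightarrow$), suppose $S$ is dense below $p$ but $s := \bigvee S < p$. Then $p\cdot(-s) \neq 0$ and $p\cdot(-s) \leq p$. By density some $d \in S$ satisfies $d \leq p\cdot(-s)$. But also $d \leq s$, so $d \leq s \cdot (-s) = 0$, contradicting $d \neq 0$.

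For ($\Leftarrow$), suppose $\bigvee S = p$ and let $0 \neq q \leq p$. Then
$$q = q\cdot p = \bigvee_{s\in S} q\cdot s,$$
using the infinite distributive law, which holds in any complete Boolean algebra. So some $q\cdot s \neq 0$. Since $S$ is a sieve and $q\cdot s \leq s$, we get $q\cdot s \in S$, and $q\cdot s \leq q$. Hence $S$ is dense below $p$.

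\textbf{Main obstacle.} The genuine difficulty is the bookkeeping around the bottom element $0$: confirming that the empty cover of $0$ satisfies the hypothesis of the Comparison Lemma. If that is in doubt, the fallback is to verify directly that restriction along $F$ and the right Kan extension (setting $H(0)=\{*\}$) are mutually inverse equivalences between the two sheaf categories.
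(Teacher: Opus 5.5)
Your proof is correct and follows the same route as the paper. Both apply the Comparison Lemma to the inclusion $\B\setminus\{0\}\hookrightarrow\B$ and then show that the induced topology is $\neg\neg$ by the same two arguments: the contradiction via $p\cdot(-\bigvee S)$, and the infinite distributive law $q=\bigvee_{s\in S} q\cdot s$. Your explicit treatment of the bottom element is more careful than the paper's. You note that the empty sieve covers $0$, so the covering hypothesis genuinely holds, and that every $\bigvee$-sheaf has $H(0)$ a singleton. The paper simply asserts the hypothesis holds because the inclusion is ``the identity map''.
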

\begin{proof}
	Denote by $e: \B \setminus \{ 0 \} \to \B$ the inclusion function. Note that since $e$ is the identity map, the property required to use Corollary \ref{comparisonlemma} is satisfied. Consider the sup topology $\bigvee$ on $\B$. By Corollary \ref{comparisonlemma}, the induced topology $J_{e}$ on $\B \setminus \{ 0\}$ provides an equivalence of categories of sheaves 
	$$Sh_{J_{e}}(\B \setminus \{0\}) \simeq Sh_{\bigvee}(\B).$$
	It remains to proof that $J_{e} = \neg \neg$. Fix $b \in \B \setminus \{ 0\}$ and let $S$ be a sieve that covers $b$. We will show that
	$$S \in \neg \neg(b) \text{ iff } <e(S)> \in \bigvee (e(b)).$$
	
	Suppose that $S \in \neg \neg (b)$. Denote $S$ by $ \{b_{j} \leq b\}_{j \in J}$. Then 
	$$<e(S)> = \{b_{j} \leq b\}_{j \in J} \cup \{0 \leq b\}. $$
	
	We want to show that 
	$$\bigvee \{b_{j} \leq b\}_{j \in J}\cup \{ 0 \leq b\} = \bigvee \{b_{j}\}_{j \in J} = b.$$
	
	Note that for all $j \in J$, $b_{j} \leq b$.  Therefore $\bigvee \{b_{j}\}_{j \in J} \leq b$. Suppose that it is not true that $b \leq \bigvee \{b_{j}\}_{j \in J}$. Denote by $x = \bigvee \{b_{j}\}_{j \in J}.$ Define $p = b \cdot (-x).$ Note that $p \leq b$ and $p \neq 0$. Then, there exists $j' \in J$ such that $b_{j'} \leq p$. In particular,
	$$b_{j'} \leq p \cdot (-x) \leq p \cdot (-b_{j'}).$$
	Then $b_{j'} = b_{j'}\cdot(p\cdot(-b_{j'})) = 0$. Contradiction. 
	
	On the other hand, suppose that $\bigvee\{b_{j}\}_{j \in J} = b.$ Fix $q \leq b$. Then
	$$q = q \cdot b = q \cdot(\underset{j \in J}{\bigvee}b_{j}) = \underset{j \in J}{\bigvee} q \cdot b_{j}.$$
	
	Then, there exists $j_{1} \in J$ such that $q \cdot b_{j_{2}} = b_{j_{1}}$, for some $j_{2} \in J$ (otherwise $q = 0$, which is not the case). Then $b_{j_{1}} \leq q$.
\end{proof}

\begin{Theo}\label{HugoMicheltheorem2}
	Let $\Po$ be a forcing poset. Then there exist a complete Boolean algebra $\B$ such that
	$$Sh_{\neg \neg}(\Po) \simeq Sh_{\bigvee}( \mathbb B).$$
\end{Theo}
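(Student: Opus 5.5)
The plan is to obtain the equivalence by composing the two equivalences already established in this section. Given the forcing poset $\Po$, I take $\B$ to be the complete Boolean algebra of regular open subsets of $\Po$, the topological space generated by the subbasis $\{\lp : p\in\Po\}$. The canonical dense morphism of Lemma \ref{canonicaldensemorphism} is then available, and this choice of $\B$ is exactly the one to which Proposition \ref{sheavesposetalgebrasem0} applies.

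The first step applies Proposition \ref{sheavesposetalgebrasem0} directly, giving
$$Sh_{\neg\neg}(\Po)\simeq Sh_{\neg\neg}(\B\setminus\{\emptyset\}).$$
This step is where separativity of $\Po$, required in Theorem \ref{equivalenceofcategoriesdensemorphismseparative}, is avoided. Proposition \ref{sheavesposetalgebrasem0} only uses that $i$ is a dense morphism, feeding it into Lemma \ref{BeilinsonLemma} without requiring $i$ to be full or injective. In a non-separative poset $i$ may identify distinct elements, so the Comparison Lemma (Corollary \ref{comparisonlemma}) in its full-and-faithful form would not apply.

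The second step applies Proposition \ref{sheavesnegationandsup} to $\B$, noting that $0_{\B}=\emptyset$:
$$Sh_{\neg\neg}(\B\setminus\{\emptyset\})\simeq Sh_{\bigvee}(\B).$$
Since equivalences of categories compose, the two displays together yield $Sh_{\neg\neg}(\Po)\simeq Sh_{\bigvee}(\B)$, which is the claim.

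The main obstacle is confirming that the hypotheses of Lemma \ref{BeilinsonLemma} hold in the first step, in particular that the canonical map $i:\Po\to\B\setminus\{\emptyset\}$ satisfies the faithfulness requirement. Here $\Po$ is only a preorder, so hom-sets are at most singletons. Any functor between preorders is therefore automatically faithful, since faithfulness only asks for injectivity on each hom-set, not on objects. Thus I expect no real difficulty there. The covering condition and the identification $J_i=\neg\neg$ are the contents of the proof of Proposition \ref{sheavesposetalgebrasem0}, which I take as given. If one were worried about $\Po$ being merely a preorder, one could alternatively pass first to its separative quotient. That would only add a third equivalence of the same dense-morphism type and is not needed.
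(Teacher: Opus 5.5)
Your proposal is correct and is exactly the paper's proof: take $\B$ to be the regular open algebra of $\Po$, then compose Proposition \ref{sheavesposetalgebrasem0} with Proposition \ref{sheavesnegationandsup}. Your side remarks are also accurate: faithfulness is automatic for preorders, and the full-and-faithful Comparison Lemma alone would not handle non-separative $\Po$. Note, though, that the genuinely nontrivial hypothesis of Lemma \ref{BeilinsonLemma} is the finite-family covering condition, not faithfulness, and you, like the paper, take that condition from Proposition \ref{sheavesposetalgebrasem0}.
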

\begin{proof}
	The result follows from Proposition \ref{sheavesposetalgebrasem0} and Proposition \ref{sheavesnegationandsup}.
\end{proof}

\section{Final remarks and future work}\label{futureworks}

\subsection{Categorical relationships }

We point out here that the connections presented in Section \ref{forcingmodelsandsheaves} can be expanded from the classical/Boolean framework to the 
intuitionistic/Heyting  setting.

Let $\He$ be a complete Heyting algebra (a.k.a., locale). It is possible to define a \emph{Heyting-valued} model, $V^{\He}$, using the same procedure used to construct a Boolean-valued model, $V^\B$, but replacing the complete Boolean algebra $\B$ by the complete Heyting algebra $\He$.

From  $V^\He$  it can be defined a category $Set^{(\He)}$  "by taking quotients" and this can be shown a category equivalent with the category  of $\He$-sets endowed with {\em relational} morphisms (see \cite{Alvim22} for a detailed account).

$$V^\He \mapsto Set^{(\He)} \simeq \mathbb{H}-sets_{rel}$$

Moreover, in Chapter 2 in \cite{Borceux94} we can find equivalences of categories\footnote{In fact, there are three intermediary equivalences: $\mathbb{H}-sets_{rel} \simeq \mathbb{H}-sets_{rel}^{compl}  \simeq \mathbb{H}-sets_{func}^{compl} \simeq Sh_{\bigvee(\mathbb{H})}$, where {\em compl} refers to complete $\mathbb{H}$-sets and {\em func} refers to functional morphisms.}:

 $$\mathbb{H}-sets_{rel} \simeq Sh_{\bigvee(\mathbb{H})}$$

A result analogous to the Theorem \ref{HugoMicheltheorem2} can be obtained:

    $$Sh(\Po) \simeq Sh_{\bigvee}(\mathbb H),$$
    
    where $\mathbb H$ is the complete Heyting algebra of all open subsets of the topological space $\mathbb P$.

Therefore, in the same vein as in Section \ref{forcingmodelsandsheaves}, we have a natural connection between intuitionistic forcing (see \cite{Mariano05}), Heyting-valued models and localic toposes (i.e. Grothendieck toposes that are equivalent to $Sh_{\bigvee}(\mathbb H)$, for some complete Heyting algebra $\He$).

\vspace{0.5cm}

On the other hand, for every Grothendieck topos $\cal G$, we have a (essentialy unique) {\em simulation} of von Neumann hierarchy inside $\cal G$ (see \cite{Fourman80}, \cite{Hayashi81}, \cite{Streicher09}):

\begin{itemize}
    \item  $V_0({\cal G}) = 0$; 
 
\item $V_{\alpha+1}({\cal G})  = P(V_{\alpha}({\cal G}))$; 
 
\item $V_\lambda({\cal G}) =  colim_{\beta<\lambda} V_\beta({\cal G})$, if $\lambda$ is a limit ordinal;

\end{itemize}
where $0$ is "the" initial object of $\cal G$ and $P(X)$ is "the" power of the object $X$ in $\cal G$

Then we can simulate a von Neumann universal class inside $\cal G$.

${\cal G} \mapsto V(\cal G) :=$ "$colim_{\alpha \in ON}$" $V_\alpha(\cal G)$

It can be interesting determine the result of application of both processes and provide comparisons from the original data with the ones obtained from iterated processes.

$$V^\He \mapsto {\cal G}(V^\He):= Set^{(\He)}  \mapsto V({\cal G}(V^\He))$$

$${\cal G} \mapsto V(\cal G)^{\He(\cal G)} \mapsto {\mathcal G} {Set}^{(\He(\cal G))}$$

where ${\He(\cal G)}$ is the locale of subobjects of the terminal object $1$ in $\cal G$, and ${\cal G}Set^{(\He(\cal G))}$ denotes the category obtained from the Heyting-valued model $V(\cal G)^{\He(\cal G)}$ (several possibilities here) by taking quotients. 


Moreover, it should be interesting analyze the behavior of the above comparison when taking into account  "changes of bases" given by convenient morphisms $\He \to \He'$ and ${\cal G} \to {\cal G}'$.

\subsection{Semantical relationships}


There is a hindrance to directly comparing the natural semantics in a model of set theory and in a topos: while the basic language of sets is untyped, the more widespread and perhaps more natural natural language of topos, the Mitchell-Bénabou language, is typed by the topos objects. In particular, in the set-theoretical context, quantifications would naturally be unbounded (since all variables are of the same type), and in the topos context, quantifications would be bounded (since all variables are of specific types). As is known and already mentioned, both semantics can be seen as embodiments of the concept of forcing (for toposes, this is called Kripke-Joyal semantics).


While this creates some difficulties in making comparisons at first, it also generates a range of opportunities to establish comparisons. Below, we outline some developments already made and other possible strategies for future investigations.





In \cite{Fourman80}, \cite{Hayashi81}, and \cite{Streicher09}, the strategy adopted was basically to use the simulation of a von Neumann hierarchy internal to a Grothendieck topos ${\cal G}$ to define unbounded quantifications in this topos by:

$$[[\exists x.\phi]] = \bigvee_{\alpha \in On} [[\exists x: V_\alpha({\cal G}).\phi]]$$

$$[[\forall x.\phi]] = \bigwedge_{\alpha \in On} [[\forall x: V_\alpha({\cal G}).\phi]]$$

where the symbol $x : b$ indicates that the variable x has type $b$ ($b$ is an object of the topos $\cal G$) and the symbol $[[\psi]]$ indicates that, for a formula $\psi$ with free variables $x_1:b_1, \ldots, x_n:b_n$, $[[\psi]]$ is a certain morphism in $\cal G$, $[[\psi]] \in Hom_{\cal G}(b_1 \times \ldots \times b_n,\Omega)$ (which in turn is a complete Heyting algebra), and $\Omega$ is "the" subobject classifier of $\cal G$ (in particular, if $\psi$ is a sentence, then $[[\psi]$ can be identified with a subobject of $1$, the terminal object of the topos $\cal G$).


Despite the restriction involved (since the Von Neumann universe in a topos considers only a fragment of it), this approach, not yet widely explored, shows promise, because in particular Theorem 4.1 in \cite{Fourman80}, states (without explicitly proving it) that for a complete Boolean algebra $\B$, the semantics of $V^\B$ and the semantics of $V(Set^{(\B)})$ (as in the previous subsection) coincide. We recall that $V^\B$ is a model of $ZFC$ and that $V^\He$ is a model of $IZF$, an intuitionistic counterpart (therefore without the axiom of choice) of $ZF$.


Another approach that essentially goes in the same direction, but expands in the previous approach, is M. Shulman's stack semantics for topos a ${\cal G}$ (in fact, for a Heyting category satisfying a technical condition), see \cite{Shulman10}. Therein, one can simulate unbounded quantifications in ${\cal G}$, roughly through a combination of processes: one can quantify both on the variables typed by objects of the slice topos ${\cal G}{\downarrow} b$ (as already occurs in the Kripke-Joyal semantics) and on all objects $b$ of the topos $\cal G$, and not only on the variables typed by objects of the special form $V_\alpha(\cal G)$ as in the approach of \cite{Fourman80, Hayashi81}.


Finally, we can also try to compare the formulas with restricted quantifications --i.e., the formulas of the usual language of topos and the formulas with bounded quantifiers\footnote{$\exists x \in v . \psi(x) := \exists x (x \in v \wedge \psi(x))$; $\forall x \in v . \psi(x) := \forall x (x \in v \to \psi(x))$.} of $L_{ZF}$-- and their corresponding semantics, considering the connections between models with values in Heyting algebras and Grothendieck's topos outlined at the end of the previous subsection. In more detail:

\begin{itemize}



\item On the one hand, from the  $IZF$  model $V^\He$ to the topos $Set^{(\He)} \simeq Sh(\He)$, we can compare those restricted formulas of $L_{ZF}$, i.e., only formulas of $L_{ZF}$ where the quantifications are bounded (in this language we eventually join constants for each name of $V^\He$), and their Heyting-value (= maximum of the values that force this formula) with the value given by the supremum (= maximum) of the values obtained by the Kripke-Joyal semantic of the corresponding formula in the Mitchell-Bénabou language of the localic topos $Set^{(\He)}$.

\item On the other hand, from the topos $\cal G$ to the universe $V(\cal G)$, compare the semantic values of the typed formulas with typed terms generated by $\cal G$ with the semantic values of the corresponding formula with bounded quantifications from the corresponding set-theoretical universe $V(\cal G)$.

\end{itemize}


\end{document}